\def\AA{{\mathbb A}}
\def\CC{{\mathbb C}}
\def\GG{{\mathbb G}}
\def\QQ{{\mathbb Q}}
\def\QQ{{\mathbb Q}}
\def\RR{{\mathbb R}}
\def\ZZ{{\mathbb Z}}
\def\pfrak{{\mathfrak p}}
\def\qfrak{{\mathfrak q}}
\def\hhat{{\hat h}}
\def\0{{\mathbf 0}}
\def\1{{\mathbf 1}}
\def\Abf{{\mathbf A}}
\def\Mcal{{\mathcal M}}
\def\Ocal{{\mathcal O}}
\def\Kbar{{\bar K}}
\def\Aut{\mathrm{Aut}}
\def\disc{\mathrm{disc}}
\def\sgn{\mathrm{sgn}}
\def\ab{\mathrm{ab}}
\def\Gal{\mathrm{Gal}}
\def\sup{\mathrm{sup}}
\def\max{\mathrm{max}}
\theoremstyle{plain}
\newtheorem{thm}{Theorem}
\newtheorem{conj}{Conjecture}
\newtheorem{cor}[thm]{Corollary}
\newtheorem{prop}[thm]{Proposition}
\newtheorem{lem}[thm]{Lemma}
\theoremstyle{definition}
\title[Abelian extensions in dynamical Galois theory]{Abelian extensions in dynamical Galois theory}
\author{Jesse Andrews}
\address{Jesse Andrews; Department of Mathematics and Computer Science; Washington College; 300 Washington Avenue; Chestertown MD 21620 U.S.A.}
\email{jandrews4@washcoll.edu}
\author{Clayton Petsche}
\address{Clayton Petsche; Department of Mathematics; Oregon State University; Corvallis OR 97331 U.S.A.}
\email{petschec@math.oregonstate.edu}
\begin{document}

\begin{abstract}
We propose a conjectural characterization of when the dynamical Galois group associated to a polynomial is abelian, and we prove our conjecture in several cases, including the stable quadratic case over $\QQ$.  In the postcritically infinite case, the proof uses algebraic techniques, including a result concerning ramification in towers of cyclic $p$-extensions.  In the postcritically finite case, the proof uses the theory of heights together with results of Amoroso-Zannier and Amoroso-Dvornicich, as well as properties of the Arakelov-Zhang pairing.
\end{abstract}

\maketitle


\section{Introduction}

Let $K$ be a number field with algebraic closure $\Kbar$.  Let $\phi(x)\in K[x]$ be a polynomial of degree $d\geq2$, and denote by $\phi^n=\phi\circ\dots\circ\phi$ the $n$-fold composition of $\phi$ with itself.  Let $\alpha\in K$ be a non-exceptional point for $\phi$; that is, assume that the backward orbit $\{\beta\in\Kbar\mid \phi^n(\beta)=\alpha\text{ for some }n\geq0\}$ of $\alpha$ is an infinite set.

For each $n\geq1$, define the $n$-th inverse image set of the pair $(\phi,\alpha)$ by 
\begin{equation*}
\phi^{-n}(\alpha) = \{\beta\in\Kbar\mid \phi^n(\beta)=\alpha\},
\end{equation*}
and let $K_n=K_n(\phi,\alpha)$ be the field generated over $K$ by $\phi^{-n}(\alpha)$.  Since the generators of $K_n$ are $\phi$-images of generators of $K_{n+1}$, we obtain a tower $K=K_0\subseteq K_1\subseteq K_2\subseteq\dots$ of Galois extensions of $K$.  Set $K_\infty=\cup_{n\geq0}K_n$.  

As described for example in \cite{MR3220023}, $\Gal(K_n/K)$ acts faithfully on the $n$-th preimage tree $T_n=T_n(\phi,\alpha)$ associated to the pair $(\phi,\alpha)$, which can be described as follows.  For each $0\leq m\leq n$, the level-$m$ vertices of $T_n$ are indexed by the elements of $\phi^{-m}(\alpha)$, and edge relations on $T_n$ are determined by $\phi$-evaluation.  In the limit as $n\to+\infty$, $\Gal(K_\infty/K)$ acts faithfully on $T_\infty=\cup T_n$, and we obtain the arboreal Galois representations
\begin{equation}\label{ArborealRep}
\begin{split}
\rho_n & :\Gal(K_n/K)\hookrightarrow\Aut(T_n) \\
\rho & :\Gal(K_\infty/K)\hookrightarrow\Aut(T_\infty).
\end{split}
\end{equation}

The study of the representations (\ref{ArborealRep}) goes back to Odoni (\cite{MR805714}, \cite{MR813379}, \cite{MR962740}, \cite{MR1418355}) and Stoll \cite{MR1174401} in the 1980s-1990s, and has found renewed interest since the mid 2000s due to a series of papers by Boston \cite{BostonTreeRep}, Boston-Jones \cite{MR2318536}, \cite{MR2520459} and Jones \cite{jones:thesis}, \cite{jones:itgaltow}, \cite{jones:denprimdiv}, \cite{MR3220023}.  Much of the current research in this area focuses on identifying cases in which $\Gal(K_\infty/K)$ is large in the sense that the arboreal Galois representation $\rho$ is surjective, or has image with finite index in $\Aut(T_\infty)$. 
 
Assume for now that the pair $(\phi,\alpha)$ is {\em stable}, that is that the $\Gal(K_n/K)$-action on $\phi^{-n}(\alpha)$ is transitive for each $n\geq1$; this is equivalent to the irreducibility of $\phi^n(x)-\alpha$ for all $n\geq1$.  In this case, each $T_n$ is the complete $d$-ary rooted tree of level $n$, so using transitivity and comparing with the size of $\Aut(T_n)$, it follows from the injectivity of (\ref{ArborealRep}) that 
\begin{equation}\label{GaloisRange}
d^n\leq|\Gal(K_n/K)|\leq  d!^{(d^{n}-1)/(d-1)}
\end{equation}
for all $n$.  Examples in which the upper bound in (\ref{GaloisRange}) is achieved for all $n\geq1$ have been identified by Odoni \cite{MR813379} and Stoll \cite{MR1174401} in degree $d=2$, by Looper \cite{MR3937588} in every prime degree, and by Specter \cite{Specter} in arbitrary degree.

In the opposite direction, let us say that a pair $(\phi,\alpha)$ is {\em minimally stable} if it is stable, and the lower bound in (\ref{GaloisRange}) is achieved for all $n\geq1$.  For example, let $K=\QQ$, $\phi(x)=x^2$, and $\alpha=-1$.  This pair $(\phi,\alpha)$ is stable, indeed $\phi^n(x)-\alpha=x^{2^n}+1$ is the $2^{n+1}$-th cyclotomic polynomial and hence is irreducible over $\QQ$, and $|\Gal(K_n/\QQ)|=[K_n:\QQ]=2^n$.  Since $K_\infty/\QQ$ is cyclotomic, $\Gal(K_\infty/\QQ)$ is abelian.  (In fact $\Gal(K_\infty/\QQ)\simeq\ZZ_2^\times\simeq\{\pm\}\times\ZZ_2$.)

More generally, an elementary argument shows that if the pair $(\phi,\alpha)$ is stable and $\Gal(K_\infty/K)$ is abelian, then $(\phi,\alpha)$ is minimally stable; see Lemma~\ref{ActionLem}.  We do not know whether the converse is true; i.e. whether the only minimally stable pairs $(\phi,\alpha)$ are those for which $\Gal(K_\infty/K)$ is abelian.  We do not directly address this question here.  Instead, in this paper we consider the following question: for precisely which pairs $(\phi,\alpha)$ is $\Gal(K_\infty/K)$ abelian?  In the stable case, this is closely related to the question of characterizing minimally stable pairs $(\phi,\alpha)$, but the question makes sense even in the absence of a stability hypothesis.  We conjecture that in general, $\Gal(K_\infty/K)$ is abelian only in cases related to the powering map example described above, or to similar examples arising from Chebyshev polynomials.  

Given a field extension $L/K$, we say the pair $(\phi,\alpha)$ is {\em conjugate} over $L$ to the pair $(\psi,\beta)$ if there exists an affine transformation $\gamma(x)=ax+b$ defined over $L$ such that $\psi=\gamma\circ\phi\circ\gamma^{-1}$ and $\beta=\gamma(\alpha)$.  It is not hard to see that if $(\phi,\alpha)$ and $(\psi,\beta)$ are conjugate over $K$, then $K_\infty(\phi,\alpha)=K_\infty(\psi,\beta)$.  But for us, the more important fact is that whether or not $\Gal(K_\infty(\phi,\alpha)/K)$ is abelian is an invariant of the $K^\ab$-conjugacy class of the pair $(\phi,\alpha)$, where $K^\ab$ is the maximal abelian extension of $K$ in $\Kbar$; see Proposition~\ref{AbelConjProp}.

\begin{conj}\label{PolynomialConjecture}
Let $K$ be a number field, let $\phi(x)\in K[x]$ be a polynomial of degree $d\geq2$, let $\alpha\in K$, and assume that $\alpha$ is not an exceptional point for $\phi$.  Then $K_\infty(\phi,\alpha)/K$ is an abelian extension if and only if the pair $(\phi,\alpha)$ is $K^\ab$-conjugate to the pair $(\psi,\beta)$ occuring in one of the following two families of examples:
\begin{itemize}
	\item[(i)] $\psi(x)=x^{d}$ and $\beta=\zeta$, a root of unity in $\Kbar$.
	\item[(ii)] $\psi(x)=T_d(x)$ is the $d$-th Chebyshev polynomial and $\beta=\zeta+\zeta^{-1}$, where $\zeta$ is a root of unity in $\Kbar$.
\end{itemize}
\end{conj}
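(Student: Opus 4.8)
The plan is to prove the two implications by completely different methods, and only special cases are likely to be within reach this way. \textbf{The ``if'' direction.} By Proposition~\ref{AbelConjProp}, whether $K_\infty(\phi,\alpha)/K$ is abelian depends only on the $K^\ab$-conjugacy class of $(\phi,\alpha)$, so it suffices to check the two model families. If $\psi(x)=x^d$ and $\beta=\zeta\in\mu_\infty$, then every element of $\psi^{-n}(\zeta)$ is a $d^n$-th root of $\zeta$, hence a root of unity, so $K_\infty\subseteq K(\mu_\infty)$ and $\Gal(K_\infty/K)$ is abelian. If $\psi=T_d$ and $\beta=\zeta+\zeta^{-1}$, the semiconjugacy $\pi(t)=t+t^{-1}$ satisfies $\pi\circ(t\mapsto t^{d})=T_d\circ\pi$, so $T_d^{-n}(\zeta+\zeta^{-1})$ is contained in $\{\eta+\eta^{-1}:\eta^{d^n}=\zeta\}\subseteq K(\mu_\infty)$, and again $\Gal(K_\infty/K)$ is abelian.

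\textbf{The ``only if'' direction: setup and the postcritically infinite case.} Assume $K_\infty/K$ is abelian; then every point of the backward orbit $\bigcup_{n\ge0}\phi^{-n}(\alpha)$ lies in $K^\ab$, and by Proposition~\ref{AbelConjProp} we may replace $(\phi,\alpha)$ by a $K^\ab$-conjugate pair with $\phi$ monic. I would split into two cases according to whether the postcritical set $P_\phi=\{\phi^n(c):\phi'(c)=0,\ n\ge1\}$ is infinite or finite; since $x^d$ and $T_d$ are postcritically finite, the postcritically infinite case must end in a contradiction. For that case I would argue with ramification: the resultant formula for $\disc(\phi^n(x)-\alpha)$ shows that, away from a fixed finite set of primes, a prime $\pfrak$ of $K$ ramifies in $K_n/K$ only if $\pfrak$ divides one of the critical values $\phi^i(c)-\alpha$ with $1\le i\le n$. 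Because some critical point now has infinite forward orbit, $\hat h_\phi(c)>0$ and the numbers $\phi^i(c)-\alpha$ grow in height, so the ramification in the tower is spread across many primes rather than concentrated; moreover $K_n$ is generated over $K_{n-1}$ by roots of $\phi(x)-\gamma$ with $\gamma\in\phi^{-(n-1)}(\alpha)$, which constrains how the inertia of each such prime sits inside $\Gal(K_\infty/K)$. One then derives a contradiction with commutativity by confronting this inertia behaviour with the rigidity of ramification in abelian towers, which is exactly where the result on ramification in towers of cyclic $p$-extensions is used. Making this comparison precise is the main obstacle in this case.

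\textbf{The postcritically finite case.} Now $\phi$ is postcritically finite and I want to force $(\phi,\alpha)$ to be $K^\ab$-conjugate to a member of family (i) or (ii). Since $\hat h_\phi(\beta)=d^{-n}\hat h_\phi(\alpha)$ for $\beta\in\phi^{-n}(\alpha)$, the backward orbit of $\alpha$ is an infinite set of $\phi$-canonically small points lying in $K^\ab$. Combining the Bogomolov-type lower bounds of Amoroso--Zannier and Amoroso--Dvornicich for Weil heights in $K^\ab$ with the comparison between $h$ and $\hat h_\phi$ governed by the Arakelov--Zhang pairings $\langle\phi,x^d\rangle$ and $\langle\phi,T_d\rangle$ (together with the equidistribution of the sets $\phi^{-n}(\alpha)$ toward the canonical measure $\mu_\phi$, which is available since $\phi$ is postcritically finite and, as one shows, $\alpha$ must be preperiodic) should force all but finitely many points of the backward orbit to be roots of unity, or else all to be of the form $\eta+\eta^{-1}$ with $\eta$ a root of unity. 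In either case $\mu_\phi$ equals the canonical measure of $x^d$ or of $T_d$, so $\phi$ shares infinitely many preperiodic points with one of these maps; by the rigidity of maps with equal canonical heights, $\phi$ is $\Kbar$-conjugate to $x^d$ or to $\pm T_d$. A descent argument then upgrades this to a $K^\ab$-conjugacy and identifies $\beta$ as $\zeta$ or $\zeta+\zeta^{-1}$ for a suitable root of unity $\zeta$.

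\textbf{Where the difficulty lies.} Besides the cyclic-$p$-extension ramification input above, the delicate points in the postcritically finite case are: ruling out $\hat h_\phi(\alpha)>0$ (via a Bogomolov property for $\hat h_\phi$ over $K^\ab$), extracting the \emph{exact} vanishing of the relevant Arakelov--Zhang pairing rather than merely its smallness, and passing from a $\Kbar$-conjugacy to a $K^\ab$-conjugacy while pinning down $\beta$.
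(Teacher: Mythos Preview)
First, note that this statement is a \emph{conjecture}: the paper does not prove it in general, only the special cases assembled into Theorem~\ref{MainStabQuadThm} (quadratic stable pairs over $\QQ$).  Your ``if'' direction is correct and matches Theorems~\ref{PoweringTheorem} and~\ref{ChebyshevTheorem}.  The ``only if'' direction, however, has a genuine gap in each branch.

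In the postcritically infinite branch you invoke the ramification result for towers of cyclic $p$-extensions (Lemma~\ref{CyclicTowerRamLem}) to conclude that only finitely many primes ramify, contradicting Bridy et~al.  But that lemma needs each $K_n/K$ to be \emph{cyclic}, and an abelian transitive $2$-group need not be cyclic (the paper exhibits such a subgroup of $A_8$ right after Lemma~\ref{CyclicSnLem}).  The paper bridges this only for $d=2$ under stability, via a substantial argument you omit: Lemma~\ref{DiscIterateLem} expresses $\disc(\phi^n(x)-\alpha)$ in terms of $\phi^n(c)$, Faltings' theorem applied to the hyperelliptic curve $y^2=A\psi^3(x)$ forces infinitely many of these discriminants to be nonsquares, whence $\Gal(K_n/K)\not\subseteq A_{2^n}$, and Lemma~\ref{CyclicSnLem} then yields cyclicity.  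Nothing in your sketch supplies this ``abelian $\Rightarrow$ cyclic'' step, and no analogue is available for $d\geq3$ or without stability.

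In the postcritically finite branch the argument is circular.  You want backward-orbit points to be roots of unity (or of the form $\eta+\eta^{-1}$), but Amoroso--Zannier only says that nonzero non-roots-of-unity in $K^\ab$ have Weil height $\geq C_K>0$.  Since $h(\alpha_n)\to\langle\phi,x^d\rangle$ rather than to $0$ (Theorem~\ref{SmallAZPairing}), a contradiction requires $\langle\phi,x^d\rangle<B_0(K^\ab)$, and nothing in your outline forces this.  Indeed for $\phi(x)=x^2-1$ over $\QQ$ one has $\langle x^2-1,x^2\rangle\approx0.167>(\log5)/12\approx0.134$, so the direct comparison fails; the paper rescues this single case by first using the ramification machinery (Lemma~\ref{NoOddRamify}, again relying on the cyclicity step above) to confine $K_\infty$ to $\QQ(\mu_{2^\infty})$, whose larger Bogomolov constant $(\log2)/4\approx0.173$ does exceed the pairing.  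Your appeal to equidistribution does not help: equidistribution is toward $\mu_\phi$, and identifies $\mu_\phi$ with $\mu_{x^d}$ only if you already know the limiting Weil height is zero, i.e.\ $\langle\phi,x^d\rangle=0$, which is precisely the conclusion sought.
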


As a special case, when $K=\QQ$ and $d=2$, we recall the well-known fact that every quadratic polynomial over $\QQ$ is $\overline{\QQ}$-conjugate to $x^2+c$ for a unique $c\in\QQ$, and moreover any such $\overline{\QQ}$-conjugacy is actually defined over $\QQ$.  Thus Conjecture~\ref{PolynomialConjecture} asserts in this case that for a pair $(\phi,\alpha)$ defined over $\QQ$ with $\deg(\phi)=2$, the extension $K_\infty(\phi,\alpha)/\QQ$ is abelian if and only if $(\phi,\alpha)$ is $\QQ$-conjugate to $(x^2,\pm1)$ or $(x^2-2,\beta)$ for $\beta=0,\pm1,\pm2$.

We prove partial results toward Conjecture~\ref{PolynomialConjecture} which can be divided into three main categories.  First, we prove Conjecture~\ref{PolynomialConjecture} in the quadratic, stable, postcritically infinite case  (Theorem~\ref{StableNPCFCaseThm}).  (Recall that a quadratic polynomial $\phi(x)$ is said to be {\em postcritically finite} if its critical point is $\phi$-preperiodic; otherwise it is postcritically infinite.)  The main ideas in this proof are algebraic, and culminate in showing under the above hypotheses that if $K_\infty/K$ were abelian, then no primes of $K$ with odd residue characteristic would ramify in $K_\infty$, in contradiction with a result of Bridy et. al. \cite{MR3762687} on arbitrary postcritically infinite maps.

Next, we prove Conjecture~\ref{PolynomialConjecture} for polynomials $\phi$ which are $\Kbar$-conjugate to either a powering map or a Chebyshev map (Theorems~\ref{PoweringTheorem} and \ref{ChebyshevTheorem}).  These proofs use the theory of heights together with a result of Amoroso-Zannier \cite{MR1817715} (generalizing a result of Amoroso-Dvornicich \cite{MR1740514}), giving a lower bound on the heights of elements in abelian extensions of number fields.  Notably, the results on powering and Chebyshev maps do not require a stability hypothesis.

Finally, we treat the particular postcritically finite map $\phi(x)=x^2-1$.  Using a combination of the ramification techniques of Theorem~\ref{StableNPCFCaseThm} with the height techniques of Theorems~\ref{PoweringTheorem} and \ref{ChebyshevTheorem} (and in particular a lower bound on the height in certain cyclotomic extensions due to Amoroso-Dvornicich \cite{MR1740514}), we prove Conjecture~\ref{PolynomialConjecture} for stable pairs $(x^2-1,\alpha)$ over $\QQ$.  We point out that the proof of this result is computer-assisted, in the sense that the key step in the proof is to  numerically calculate the Arakelov-Zhang pairing $\langle x^2-1,x^2\rangle$ with enough precision to show that it is less than the Bogomolov constant of the maximal abelian extension of $\QQ$ unramified at all odd primes.  In particular, we use SageMath to calculate a sum of elementary approximations to local height functions evaluated at roots of unity.

Combining these results, and using the well known fact that every quadratic polynomial over $\QQ$ is either postcritically infinite or else $\QQ$-conjugate to either the squaring map $x^2$, the Chebyshev map $x^2-2$, or $x^2-1$, we obtain the following.

\begin{thm}\label{MainStabQuadThm}
Conjecture~\ref{PolynomialConjecture} is true for all quadratic stable pairs $(\phi,\alpha)$ over $\QQ$.
\end{thm}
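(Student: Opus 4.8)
The plan is to assemble the theorem from the three blocks of partial results described in the introduction, together with the classical normalization of quadratic polynomials over $\QQ$. Let $(\phi,\alpha)$ be a quadratic stable pair over $\QQ$. First I would invoke the well-known fact that every degree-$2$ polynomial over $\QQ$ is $\QQ$-conjugate (via an affine map $\gamma(x)=ax+b$ with $a,b\in\QQ$) to a unique polynomial of the form $x^2+c$ with $c\in\QQ$; applying $\gamma$ to the marked point $\alpha$ as well, and using that $K_\infty$ and the property ``$\Gal(K_\infty/K)$ is abelian'' are invariant under $\QQ$-conjugacy (Proposition~\ref{AbelConjProp}), we may assume from the outset that $\phi(x)=x^2+c$. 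The critical point of $x^2+c$ is $0$, and its forward orbit is $c, c^2+c, \dots$; this orbit is finite precisely in the postcritically finite (PCF) cases. A short classification of which $c\in\QQ$ give a PCF map $x^2+c$ shows that the only possibilities are $c=0$ (giving $x^2$), $c=-1$ (giving $x^2-1$), and $c=-2$ (giving $x^2-2$, which is $\QQ$-conjugate to the Chebyshev map $T_2$); every other value of $c$ yields a postcritically infinite map.

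With this trichotomy in hand, the proof splits into three cases, each already handled earlier in the paper. If $x^2+c$ is postcritically infinite, then Theorem~\ref{StableNPCFCaseThm} (Conjecture~\ref{PolynomialConjecture} in the quadratic, stable, postcritically infinite case) applies directly and gives the desired conclusion, namely that $\Gal(K_\infty/\QQ)$ is abelian if and only if $(\phi,\alpha)$ is $\QQ^\ab$-conjugate to one of the powering or Chebyshev examples of Conjecture~\ref{PolynomialConjecture}; but one then checks that no postcritically infinite $x^2+c$ is $\Kbar$-conjugate to $x^2$ or to a Chebyshev polynomial (both of which are PCF), so in fact this case contributes no abelian examples, consistent with the conjecture. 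If $\phi(x)=x^2$, then Theorem~\ref{PoweringTheorem} applies (it does not require stability) and yields Conjecture~\ref{PolynomialConjecture} for this pair. If $\phi(x)=x^2-2$, which is $\QQ$-conjugate to the Chebyshev polynomial $T_2(x)=x^2-2$ itself, then Theorem~\ref{ChebyshevTheorem} applies and yields the conjecture for this pair. Finally, if $\phi(x)=x^2-1$, we invoke the dedicated result on stable pairs $(x^2-1,\alpha)$ over $\QQ$ described in the introduction (the one whose proof rests on the numerical computation of the Arakelov-Zhang pairing $\langle x^2-1, x^2\rangle$ against the Bogomolov constant of the relevant abelian extension).

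Since every quadratic stable pair over $\QQ$ falls into exactly one of these four cases ($x^2+c$ postcritically infinite, or $\phi$ equal to $x^2$, $x^2-2$, or $x^2-1$ after $\QQ$-conjugation), and Conjecture~\ref{PolynomialConjecture} has been verified in each, the theorem follows. The only genuinely new content of this theorem beyond a bookkeeping exercise is the verification that the four cases are exhaustive and that the ``only if'' directions match up correctly across the postcritically infinite/finite boundary; I expect the main subtlety to be precisely this last compatibility check — confirming that the postcritically infinite analysis of Theorem~\ref{StableNPCFCaseThm} never produces a pair that is $\QQ^\ab$-conjugate into family (i) or (ii), so that the union of the conclusions of the three theorems is exactly the statement of Conjecture~\ref{PolynomialConjecture} restricted to quadratic stable pairs over $\QQ$. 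This amounts to observing that a $\Kbar$-conjugacy class of quadratic polynomials is determined by the multiplier data at the critical orbit, and that powering and Chebyshev maps are postcritically finite while the postcritically infinite $x^2+c$ are not, so the classes are disjoint.
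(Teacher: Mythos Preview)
Your proposal is correct and follows essentially the same approach as the paper: reduce to $\phi(x)=x^2+c$ by $\QQ$-conjugacy, classify the PCF cases as $c\in\{0,-1,-2\}$, and then invoke Theorem~\ref{StableNPCFCaseThm} in the postcritically infinite case, Theorems~\ref{PoweringTheorem} and~\ref{ChebyshevTheorem} for $c=0$ and $c=-2$, and the dedicated $x^2-1$ result for $c=-1$. The paper assembles this argument at the start of $\S$\ref{x2MinusOneSect}, and your compatibility check (that powering and Chebyshev maps are PCF, hence disjoint from the postcritically infinite case) is the right way to see that Theorem~\ref{StableNPCFCaseThm}'s conclusion ``$\Gal(K_\infty/K)$ is nonabelian'' matches the conjecture's ``only if'' direction there.
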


It is well-known that any iterate of an Eisenstein polynomial in $\ZZ[x]$ is again Eisenstein, so the pair $(\phi(x),0)$ is stable whenever $\phi(x)\in\ZZ[x]$ is Eisenstein.  Using this observation, we can give the following simple examples to show that in each of the cases described above, stable pairs $(x^2+c,\alpha)$ exist over $\QQ$ and hence Theorem~\ref{MainStabQuadThm} is non-vacuous in each case.  
\begin{itemize}
\item[(i)]  For any prime $p$, the (post-critically infinite) pair $(x^2+p,0)$ is stable.  
\item[(ii)]  If $\alpha\in\ZZ$ and $\alpha\equiv2\text{ or }3\pmod{4}$, then the squaring pair $(x^2,\alpha)$ is stable, since it is conjugate to $(x^2+2\alpha x+\alpha^2-\alpha,0)$, which is $2$-Eisenstein.  Note that this family includes both abelian examples, such as $(x^2,-1)$, and nonabelian examples, such as $(x^2,3)$.
\item[(iii)]  If $\alpha\in\ZZ$ and $\alpha\equiv0\text{ or }1\pmod{4}$, then the Chebyshev pair $(x^2-2,\alpha)$ is stable, since it is conjugate to $(x^2+2\alpha x+\alpha^2-\alpha-2,0)$, which is $2$-Eisenstein.  Note that this family includes both abelian examples, such as $(x^2-2,0)$, and nonabelian examples, such as $(x^2-2,4)$.
\item[(iv)]  This example was shown to us by Chifan Leung.  If $\alpha\in\ZZ$ and $\alpha\equiv1\text{ or }2\pmod{4}$, then the pair $(x^2-1,\alpha)$ is stable.  It suffices to show that $(\phi^2,\alpha)$ is stable, where $\phi(x)=x^2-1$ and $\phi^2(x)=x^4-2x^2$, since the irreducibility of $\phi^{2n}(x)-\alpha$ implies the irreducibility of $\phi^{2n-1}(x)-\alpha$.  The stability of $(\phi^2,\alpha)$ follows from the fact that it is conjugate to $(\phi^2(x+\alpha)-\alpha,0)$, which is easily checked to be $2$-Eisenstein.  (See also \cite{ABCCF} for a study of large-image results for arboreal Galois representations associated to $\phi(x)=x^2-1$.)
\end{itemize}

While this paper was under review, A. Ferraguti and C. Pagano \cite{FerragutiPagano} have informed us that they have used an entirely different approach to give a complete proof of the $K=\QQ$, $d=2$ case of Conjeture~\ref{PolynomialConjecture} (not requiring any stability assumption).

The plan of this paper is as follows.  In $\S$~\ref{AlgLemSect} we prove some preliminary algebraic lemmas, and in $\S$~\ref{NonPCFSect} we prove Conjecture~\ref{PolynomialConjecture} in the quadratic, stable, postcritically infinite case.  In $\S$~\ref{HeightsSect} we review the absolute Weil height function defined on algebraic extensions of $\QQ$, we recall the concept of the Bogomolov constant associated to such fields, and we describe related results of Amoroso-Zannier \cite{MR1817715} and Amoroso-Dvornicich \cite{MR1740514}.  In $\S$~\ref{PowerChebSect} we prove Conjecture~\ref{PolynomialConjecture} for powering maps and Chebyshev maps.  In $\S$~\ref{AZPairingSect} we review the definition and basic facts about the Arakelov-Zhang pairing, and in $\S$~\ref{x2MinusOneSect} and $\S$~\ref{ApproxSect} we treat the particular polynomial $\phi(x)=x^2-1$, calculate the Arakelov-Zhang pairing $\langle x^2-1,x^2\rangle$, and prove Conjecture~\ref{PolynomialConjecture} for stable pairs $(x^2-1,\alpha)$ over $\QQ$.  

\medskip

Acknowledgements: We thank Rafe Jones for several helpful suggestions.

\section{Some algebraic lemmas}\label{AlgLemSect}

\begin{lem}\label{ActionLem}
Let $G$ be a finite abelian group acting faithfully and transitively on a finite set $X$.  Then $|G|=|X|$.
\end{lem}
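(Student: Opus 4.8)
The plan is to show that the point stabilizers are trivial and then invoke the orbit--stabilizer theorem. First I would fix a point $x\in X$ and consider its stabilizer $H=\{g\in G\mid gx=x\}$. For an arbitrary $g\in G$, the stabilizer of the point $gx$ is the conjugate subgroup $gHg^{-1}$; since $G$ is abelian this equals $H$. Thus every point in the orbit of $x$ has stabilizer exactly $H$, and by transitivity the orbit of $x$ is all of $X$, so $H$ fixes every element of $X$. In other words $H$ lies in the kernel of the action, which is trivial because the action is faithful; hence $H=\{1\}$.

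With the stabilizer shown to be trivial, the orbit--stabilizer theorem gives $|X|=|G|/|H|=|G|$, completing the argument. The only mildly delicate point is the identification $\mathrm{Stab}_G(gx)=g\,\mathrm{Stab}_G(x)\,g^{-1}$, which is a direct check ($h\in\mathrm{Stab}_G(gx)$ iff $hgx=gx$ iff $g^{-1}hg\in\mathrm{Stab}_G(x)$), together with the observation that commutativity collapses this conjugate back to $\mathrm{Stab}_G(x)$. I do not anticipate any real obstacle; the entire content is the interaction of abelianness (all conjugates of a subgroup coincide) with transitivity (all point stabilizers are conjugate) and faithfulness (the common stabilizer must be trivial).
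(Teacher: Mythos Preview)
Your proof is correct and follows essentially the same approach as the paper: both arguments show that all point stabilizers coincide (you via the conjugation formula $\mathrm{Stab}_G(gx)=gHg^{-1}=H$, the paper via the direct computation $hy=hgx=ghx=gx=y$), then use faithfulness to conclude the common stabilizer is trivial, and finish with the orbit--stabilizer theorem.
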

\begin{proof}
For each $x\in X$, let $G_x$ be the stabilizer of $x$.  Then $G_x=G_{y}$ for all $x,y\in X$.  Indeed, writing $y=gx$ for $g\in G$, if $h\in G_x$ then $hy=hgx=ghx=gx=y$, showing that $h\in G_{y}$ as well.  Thus $G_x\subseteq G_{y}$, and $G_x= G_{y}$ follows from symmetry.  Since the action is faithful, we have $\cap_{x\in X}G_x=\{1\}$, and since the stabilizers are all equal to each other we conclude that $G_{x}=\{1\}$ for all $x\in X$. Therefore $|X|=(G:G_{x})=(G:1)=|G|$ by the orbit stabilizer theorem.
\end{proof}

\begin{lem}\label{AbelianSnLem}
If $G$ is an abelian, transitive subgroup of $S_{N}$ and if $\sigma\in G$ is an element of order $\ell$, then $\sigma=c_1c_2\dots c_r$ for some $r$ disjoint $\ell$-cycles $c_1,c_2,\dots, c_r$, where $r=N/\ell$.
\end{lem}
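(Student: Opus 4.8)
The plan is to show that the hypotheses force $G$ to act \emph{freely} on $X=\{1,\dots,N\}$, after which the cycle type of any element is immediate. First I would invoke Lemma~\ref{ActionLem}: the subgroup $G\le S_N$ acts faithfully on $X$, and transitively by hypothesis, so $|G|=N$. More to the point, the argument proving Lemma~\ref{ActionLem} shows that \emph{every} point stabilizer $G_i$ is trivial: all the $G_i$ coincide because $G$ is abelian (indeed transitive with commuting elements), and their common value is $\bigcap_{i\in X}G_i=\{1\}$ by faithfulness.

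With this in hand, fix $\sigma\in G$ of order $\ell$ and fix $i\in X$. The length of the cycle of $\sigma$ containing $i$ is the least positive integer $k$ with $\sigma^k(i)=i$, equivalently the least $k$ with $\sigma^k\in G_i=\{1\}$, which is exactly $k=\ell$. Hence every cycle of $\sigma$ has length $\ell$. Since the cycles of $\sigma$ partition the $N$-element set $X$, it follows that $\ell\mid N$ and that $\sigma=c_1c_2\cdots c_r$ for disjoint $\ell$-cycles $c_1,\dots,c_r$ with $r=N/\ell$, as claimed.

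There is really no obstacle here; the entire content is the observation that a transitive abelian permutation group has trivial point stabilizers, which is exactly what was established inside the proof of Lemma~\ref{ActionLem}. One could alternatively phrase matters via the regular representation — a transitive abelian group acts regularly, so its action on $X$ is isomorphic to left translation on $G$, under which an order-$\ell$ element visibly partitions $G$ into $|G|/\ell$ cosets of $\langle\sigma\rangle$, each an $\ell$-cycle — but the direct stabilizer argument above is shorter and self-contained.
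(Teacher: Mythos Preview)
Your proof is correct, but it takes a different route from the paper's. The paper argues directly with the cycle decomposition: writing $\sigma=c_1\cdots c_r$ (allowing $1$-cycles so every point appears), it picks for each $j$ a $\tau\in G$ carrying a point of $c_j$ to a point of $c_1$, then uses commutativity to get $\sigma=\tau\sigma\tau^{-1}=(\tau c_1\tau^{-1})\cdots(\tau c_r\tau^{-1})$ and uniqueness of the cycle decomposition to conclude $c_1=\tau c_j\tau^{-1}$, hence all $c_j$ have the same length. Your argument instead extracts from the proof of Lemma~\ref{ActionLem} the stronger fact that every point stabilizer is trivial (i.e.\ the action is regular), and then reads off the cycle length of $\sigma$ through any point as the order of $\sigma$. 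Your approach is shorter and more conceptual once one has the regularity statement in hand; the paper's conjugation argument is more self-contained in that it does not reach back into the internals of the preceding lemma, and it illustrates a standard technique (conjugation permutes the cycles) that is useful elsewhere. Either way the content is the same elementary observation that a transitive abelian permutation group acts freely.
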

\begin{proof}
Recall the standard calculation that if $(i_1\dots i_\ell)\in S_N$ is a cycle and if $\tau\in S_N$, then $\tau (i_1\dots i_\ell)\tau^{-1}=(\tau(i_1)\dots\tau(i_\ell))$.  

We may write $\sigma=c_1c_2\dots c_r$ for some $r$ disjoint cycles $c_1,c_2,\dots, c_r$ of lengths $\ell_1,\ell_2,\dots, \ell_r$, respectively, and this decomposition is unique up to ordering.  If necessary, interpreting some of the cycles $c_j$ to be $1$-cycles, we may assume that every element of $\{1,2,\dots,N\}$ occurs in precisely one of the cycles $c_j$.

Fix $2\leq j\leq r$.  By transitivity, select $\tau\in G$ taking some element of $\{1,2,\dots,N\}$ occurring in the cycle $c_j$ to some element occurring in the cycle $c_1$.  Since $G$ is abelian,
$$
\sigma = \tau\sigma\tau^{-1} = (\tau c_1\tau^{-1})\dots(\tau c_r\tau^{-1}),
$$
and so by uniqueness of the disjoint cycle decomposition of $\sigma$, we conclude that $c_1=\tau c_j\tau^{-1}$.  In particular, all of the cycles $c_j$ have the same length $\ell_1$, which must then be equal to the order $\ell$ of $\sigma$.  Finally, $r\ell=N$, as every element of $\{1,2,\dots,N\}$ occurs in precisely one of the cycles $c_j$.
\end{proof}

\begin{lem}\label{CyclicSnLem}
Let $G$ be an abelian, transitive subgroup of $S_{2^n}$ which is not a subgroup of $A_{2^n}$.  Then $G$ is cyclic.
\end{lem}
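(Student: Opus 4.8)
The plan is to use the hypothesis that $G$ contains an odd permutation, together with Lemmas~\ref{ActionLem} and \ref{AbelianSnLem}. First I would apply Lemma~\ref{ActionLem}: since $G$ is abelian and acts transitively on the set $\{1,2,\dots,2^n\}$, we get $|G|=2^n$. Hence it suffices to exhibit a single element of $G$ of order $2^n$, for then $G$ is generated by that element and is cyclic.

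Next, because $G$ is not contained in $A_{2^n}$, choose $\sigma\in G$ with $\sgn(\sigma)=-1$, and let $\ell$ be the order of $\sigma$. By Lemma~\ref{AbelianSnLem}, $\sigma=c_1c_2\cdots c_r$ is a product of $r=2^n/\ell$ disjoint $\ell$-cycles. Since an $\ell$-cycle has sign $(-1)^{\ell-1}$, we obtain $\sgn(\sigma)=(-1)^{r(\ell-1)}$, so $r(\ell-1)$ is odd; in particular $r$ is odd. But $r$ divides $2^n$, so $r=1$. Therefore $\sigma$ is a single $2^n$-cycle, it has order $\ell=2^n=|G|$, and consequently $G=\langle\sigma\rangle$ is cyclic.

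I do not expect any real obstacle here: the statement is a short deduction from the two preceding lemmas, the only computation being the sign of a product of equal-length disjoint cycles. The one point I would state carefully is the elementary observation that an odd divisor $r$ of $2^n$ must equal $1$, which immediately collapses the cycle decomposition of $\sigma$ to a single full cycle.
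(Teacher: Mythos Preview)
Your proposal is correct and follows essentially the same approach as the paper: both invoke Lemma~\ref{ActionLem} to get $|G|=2^n$, pick an odd permutation $\sigma$, use Lemma~\ref{AbelianSnLem} to write it as $r$ disjoint $\ell$-cycles with $r\ell=2^n$, compute the sign to force $r$ odd, and conclude $r=1$. The only cosmetic difference is that the paper first observes $\ell$ is a power of $2$ (hence even) so each cycle has sign $-1$, whereas you use the formula $\sgn(\sigma)=(-1)^{r(\ell-1)}$ directly; both reach $r$ odd in one line.
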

\begin{proof}
By Lemma~\ref{ActionLem}, we have $|G|=2^n$.  Let $\sigma\in G$ be an odd permutation of order $\ell$; thus $\ell\geq2$ is a power of $2$.  By Lemma~\ref{AbelianSnLem}, we have a decomposition $\sigma=c_1c_2\dots c_r$ into disjoint $\ell$-cycles $c_j$, and $r\ell=2^n$.  Since $\ell$ is even, $\sgn(c_j)=-1$ for all $j$, and therefore 
$$
-1=\sgn(\sigma) = \prod_{1\leq j\leq r}\sgn(c_j) = (-1)^{r}.
$$
Thus $r$ is odd, and as $r\ell=2^n$, we must have $r=1$.  We conclude that $\sigma=c_1$ is a $2^n$-cycle and hence that $G=\langle\sigma\rangle$ is cyclic.
\end{proof}

The assumption that $G\not\subseteq A_{2^n}$ cannot be omitted.  For example, the order $8$ subgroup $G=\langle\sigma,\tau\rangle$ of $A_8$ generated by the (commuting) permutations $\sigma=(1537)(2648)$ and $\tau=(12)(34)(56)(78)$ is abelian and transitive, but not cyclic.  We also point out that this counterexample cannot be removed using properties of tree automorphisms, as we may view $G$ as a subgroup of the automorphism group of a binary rooted tree of level $3$, by embedding the tree in the usual way in the plane and labeling the level-$3$ vertices by the numbers $1,\dots,8$ from left to right.

\begin{lem}\label{DiscIterateLem}
Let $f(x)=Ax^2+Bx+C\in K[x]$  be a quadratic polynomial, and let $c=-B/2A$ be its critical point.  Then for all $n\geq1$, 
\begin{equation}\label{DiscIterateIdentity}
\disc(f^n) = (-1)^{2^{n-1}}2^{2^n}A^{2^{2n-1}-1}\disc(f^{n-1})^2f^n(c).
\end{equation}
\end{lem}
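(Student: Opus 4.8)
The plan is to run an induction on $n$ via the classical relation between discriminant and resultant. Write $N=2^n=\deg f^n$, and recall that for a polynomial $g$ of degree $m$ with leading coefficient $a$ one has $\disc(g)=(-1)^{m(m-1)/2}a^{-1}\Res(g,g')$, while $\Res(g,h)=a^{\deg h}\prod_{g(\xi)=0}h(\xi)$, the product running over the roots of $g$ with multiplicity. A one-line induction on $f^n=f\circ f^{n-1}$ shows that the leading coefficient of $f^n$ equals $A^{2^n-1}$. Both sides of~(\ref{DiscIterateIdentity}) are rational functions of $A,B,C$ (with denominators coming only from $c=-B/2A$), so it is enough to prove the identity on the Zariski-dense locus where $f^n$ is separable; since $f^n$ fails to be separable exactly when $f^k(c)=0$ for some $1\le k\le n$, there the product formula for $\Res$ applies cleanly. (The case $n=1$ is immediate from $\disc(f)=B^2-4AC$ and $f(c)=(4AC-B^2)/4A$, with the convention $\disc(f^0)=1$.)

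Fix such an $f$ and let $\alpha$ range over the distinct roots of $f^n$. By the chain rule $(f^n)'(\alpha)=(f^{n-1})'(f(\alpha))\,f'(\alpha)$, so
\[
\Res(f^n,(f^n)')=\bigl(A^{2^n-1}\bigr)^{2^n-1}\Bigl(\prod_{\alpha}(f^{n-1})'(f(\alpha))\Bigr)\Bigl(\prod_{\alpha}f'(\alpha)\Bigr).
\]
For the last factor, $f'(x)=2A(x-c)$ together with $f^n(x)=A^{2^n-1}\prod_\alpha(x-\alpha)$ gives $\prod_\alpha f'(\alpha)=(2A)^{2^n}A^{-(2^n-1)}f^n(c)$ (using that $2^n$ is even, and that $f^n(c)\ne 0$ keeps every factor nonzero). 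For the middle factor, on the separable locus each root $\beta$ of $f^{n-1}$ has exactly two distinct $f$-preimages among the roots of $f^n$ — distinct, since a repeated preimage would force $\beta=f(c)$ and hence $f^n(c)=0$ — so $\prod_\alpha(f^{n-1})'(f(\alpha))=\prod_\beta\bigl((f^{n-1})'(\beta)\bigr)^2$. Applying the product formula once more in degree $2^{n-1}$ and then the discriminant–resultant relation there rewrites this as $A^{e}\disc(f^{n-1})^2$ for an explicit exponent $e$, the sign from that resultant disappearing upon squaring.

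Substituting into $\disc(f^n)=(-1)^{2^n(2^n-1)/2}A^{-(2^n-1)}\Res(f^n,(f^n)')$: the power of $2$ is visibly $2^{2^n}$; since $2^n-1$ is odd, $(-1)^{2^n(2^n-1)/2}=(-1)^{2^{n-1}}$, the claimed sign; and collecting the contributions to the exponent of $A$ — from $(A^{2^n-1})^{2^n-1}$, from $A^{-(2^n-1)}$, from $(2A)^{2^n}$, and from the middle factor $A^e$ — a short arithmetic check yields total exponent $2^{2n-1}-1$. This is exactly~(\ref{DiscIterateIdentity}). The only genuine work is this bookkeeping of the exponents of $A$ and of the accumulated signs; the one point needing care is the reduction to the separable case, equivalently the observation that both sides of~(\ref{DiscIterateIdentity}) vanish precisely when $f^k(c)=0$ for some $k\le n$, after which everything is a mechanical application of standard resultant identities.
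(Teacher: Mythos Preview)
Your argument is correct. The paper does not actually supply a proof of this lemma: it simply cites Jones \cite{jones:denprimdiv}, Lemma~2.6, remarks that tracking the sign $(-1)^{2^{n-1}}$ through Jones' computation is straightforward, and verifies the base case $n=1$ by hand. Your proof fills in exactly what the paper omits, and it does so by the same standard route that Jones uses --- the discriminant--resultant relation combined with the chain rule $(f^n)'=(f^{n-1})'\!\circ f\cdot f'$ --- so there is no genuine methodological difference. The reduction to the separable locus and the bookkeeping of the $A$-exponent (which indeed comes out to $(2^n-1)(2^n-2)-2(2^{n-1}-1)(2^{n-1}-2)+2^n-(2^n-1)=2^{2n-1}-1$) are both fine.
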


This identity is worked out (in greater generality) up to sign by Jones in \cite{jones:denprimdiv} Lemma 2.6; it is straightforward to go through Jones' calculation and keep track of the factor $(-1)^{2^{n-1}}$, which of course is $-1$ when $n=1$ and $+1$ when $n\geq2$.  To check (\ref{DiscIterateIdentity}) when $n=1$, take $f^0(x)=x$ and hence $\disc(f^0)=1$, which is reasonable as one typically interprets the empty product to be $1$.  In this case, the right hand side of (\ref{DiscIterateIdentity}) simplifies to $-4Af(c)=B^2-4AC$, as expected.

\section{Ramification and postcritically infinite quadratic maps}\label{NonPCFSect}

We recall standard facts and notation surrounding a finite Galois extension $L/K$ of number fields; see Lang \cite{lang:numbertheory} Ch. 1.  Given a prime $\pfrak$ of $K$, by the Galois assumption we have a factorization of the form $\pfrak\Ocal_L=\qfrak_1^e\dots\qfrak_r^e$ for primes $\qfrak_1,\dots\qfrak_r$ of $L$, and $ref=[L:K]$, where $e=e(\qfrak_i/\pfrak)$ and $f=f(\qfrak_i/\pfrak)$ are the (common) ramification indices and inertial degrees of the  $\qfrak_i$, respectively.  Moreover, each $\Ocal_L/\qfrak_i$ is a degree $f$ extension of $\Ocal_K/\pfrak$.  For each $1\leq i\leq r$, let  
\begin{equation*}
\begin{split}
D_{\qfrak_i/\pfrak} & = \{\sigma\in\Gal(L/K)\mid \sigma(\qfrak_i)=\qfrak_i\} \\
I_{\qfrak_i/\pfrak} & = \{\sigma\in D_{\qfrak_i/\pfrak}\mid \sigma(x)\equiv x \pmod{\qfrak_i} \text{ for all }x\in\Ocal_L\}
\end{split}
\end{equation*}
be the associated decomposition and inertia groups.  Thus $I_{\qfrak_i/\pfrak}$ has order $e(\qfrak_i/\pfrak)$, $\pfrak$ is unramified in $L^{I_{\qfrak_i/\pfrak}}$, and if $\pfrak'$ denotes any prime of $L^{I_{\qfrak_i/\pfrak}}$ lying over $\pfrak$, then $\pfrak'$ is totally ramified in $L$.

\begin{lem}\label{RamBoundLem}
Let $L/K$ be a Galois extension of number fields and let $\pfrak$ be a prime of $K$ which is tamely ramified in $L$.  Let $\qfrak$ be a prime of $L$ lying over $\pfrak$.  Then 
$$
e(\qfrak/\pfrak)\leq |\Ocal_K/\pfrak|^{f(\qfrak/\pfrak)}-1.
$$
\end{lem}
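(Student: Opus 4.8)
The plan is to reduce, via the inertia field, to the case of a totally tamely ramified extension, and then to apply the classical structure of tame ramification. Write $e=e(\qfrak/\pfrak)$, $f=f(\qfrak/\pfrak)$, $Q=|\Ocal_K/\pfrak|$, and let $p$ be the residue characteristic of $\pfrak$, so that the lemma asserts $e\le Q^{f}-1$. Set $I=I_{\qfrak/\pfrak}$ and $T=L^{I}$, with $\pfrak'=\qfrak\cap\Ocal_T$, as in the paragraph preceding the lemma; thus $\pfrak$ is unramified in $T$ and $\pfrak'$ is totally ramified in $L$. Multiplicativity of ramification indices and residue degrees along $K\subseteq T\subseteq L$ then gives $e(\qfrak/\pfrak')=e$, $f(\qfrak/\pfrak')=1$, and $f(\pfrak'/\pfrak)=f$; in particular $\Ocal_L/\qfrak=\Ocal_T/\pfrak'$, a field with $Q^{f}$ elements. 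Moreover $L/T$ is Galois with group $I$ (the Galois correspondence applied to $L/K$ and the subgroup $I$), $\qfrak$ is the unique prime of $L$ above $\pfrak'$, and one checks from the definitions that $D_{\qfrak/\pfrak'}=I_{\qfrak/\pfrak'}=I$; hence $L/T$ is totally and tamely ramified at $\qfrak/\pfrak'$, of degree $|I|=e$.

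Now the key classical input enters. A totally tamely ramified extension of local fields of degree $e$ is generated over the base by an $e$-th root of a uniformizer, so, completing $L/T$ at $\qfrak$ and $\pfrak'$, the Galois extension $L_{\qfrak}/T_{\pfrak'}$ must contain all $e$-th roots of that uniformizer, and hence the full group $\mu_{e}$ of $e$-th roots of unity. Since $p\nmid e$ by tameness, $X^{e}-1$ is separable modulo $\qfrak$, so reduction embeds $\mu_{e}$ into $(\Ocal_L/\qfrak)^{\times}$ by Hensel's lemma, giving
\[
e \;\le\; |(\Ocal_L/\qfrak)^{\times}| \;=\; Q^{f}-1,
\]
as desired. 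Equivalently and more quickly, one may bypass the inertia field and invoke directly the standard fact that the tame character $\sigma\mapsto\overline{\sigma(\varpi)/\varpi}$ (for $\varpi$ a uniformizer of $L_{\qfrak}$) is an injective homomorphism from $I_{\qfrak/\pfrak}$, which has order $e$, into the multiplicative group of the residue field $\Ocal_L/\qfrak$; see Lang, \emph{Algebraic Number Theory}, or Serre, \emph{Local Fields}, Ch.~IV.

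I do not anticipate a genuine obstacle here: this is a routine consequence of tame ramification theory. The points that need attention are the bookkeeping of ramification indices and residue degrees through the inertia field, so as to identify $\Ocal_L/\qfrak$ correctly as a field with $Q^{f}$ elements, and a careful citation of the structure theorem. If one prefers a self-contained argument, the injectivity of the tame character can be deduced from the fact that the different of a tamely ramified local extension of degree $e$ is exactly $\mfrak^{\,e-1}$.
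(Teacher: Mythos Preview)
Your proposal is correct. In fact, the ``equivalent and quicker'' argument you mention at the end---injecting $I_{\qfrak/\pfrak}$ into $(\Ocal_L/\qfrak)^\times$ via the tame character $\sigma\mapsto\overline{\sigma(\varpi)/\varpi}$---is precisely the paper's proof; your primary route through the inertia field and $e$-th roots of unity is a mild variant that reaches the same cardinality bound by embedding $\mu_e$ rather than $I$ into the residue field, but it buys nothing extra here and is slightly longer.
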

\begin{proof}
Let $\pi\in\Ocal_L$ be a uniformizer for $\qfrak$, and consider the group homomorphism
\begin{equation*}
\begin{split}
I_{\qfrak/\pfrak} & \to(\Ocal_L/\qfrak)^\times \\
	\sigma & \mapsto \sigma(\pi)/\pi \pmod{\qfrak}
\end{split}
\end{equation*}
Standard arguments from the theory of local fields show that this map does not depend on the choice of uniformizer, and the tame ramification hypothesis implies that it is injective; see \cite{MR911121} $\S$I.8.  Together with the fact that $|\Ocal_L/\qfrak|=|\Ocal_K/\pfrak|^{f(\qfrak/\pfrak)}$, we obtain the desired inequality.
\end{proof}

\begin{lem}\label{CyclicTowerRamLem}
Let $K$ be a number field, let $K=K_0\subset K_1\subset K_2\subset\dots$ be a tower of distinct cyclic $p$-extensions of $K$, and let $K_\infty=\cup K_n$.  If $\pfrak$ is a prime of $K$ with residue characteristic not equal to $p$, then $\pfrak$ is unramified in $K_\infty$.
\end{lem}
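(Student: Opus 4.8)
The plan is to show that ramification at $\pfrak$ is bounded independently of $n$, which forces it to vanish. First I would note that $\pfrak$ is tamely ramified in every $K_n$, since the residue characteristic of $\pfrak$ is not $p$ and $[K_n:K]$ is a power of $p$; thus wild ramification never occurs. Let $\pfrak_n$ be a prime of $K_n$ lying over $\pfrak$, chosen compatibly so that $\pfrak_{n+1}\mid\pfrak_n$. Write $e_n=e(\pfrak_n/\pfrak)$ and $f_n=f(\pfrak_n/\pfrak)$. Applying Lemma~\ref{RamBoundLem} to $K_n/K$ gives
\begin{equation*}
e_n\leq |\Ocal_K/\pfrak|^{f_n}-1.
\end{equation*}
The subtlety is that the right-hand side grows with $f_n$, so this alone does not give a uniform bound; one must also control the inertial degree.

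Next I would exploit the cyclic $p$-extension structure via the inertia group. Since $K_n/K$ is abelian, the decomposition and inertia groups do not depend on the choice of prime above $\pfrak$, so write $I_n\subseteq D_n\subseteq\Gal(K_n/K)$ for the inertia and decomposition subgroups. Because $\Gal(K_n/K)$ is cyclic of $p$-power order, so is $I_n$; say $|I_n|=e_n=p^{a_n}$. As $n$ increases these fit into a compatible inverse system $I_{n+1}\twoheadrightarrow I_n$ (restriction), so the $e_n$ are nondecreasing powers of $p$. Now the quotient $D_n/I_n$ is cyclic of order $f_n$ — and crucially it is generated by the Frobenius, whose order is the residue degree. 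Here is the key point: $D_n/I_n\cong\Gal((\Ocal_{K_n}/\pfrak_n)/(\Ocal_K/\pfrak))$, and in the tame case the whole local extension $K_{n,\pfrak_n}/K_\pfrak$ has Galois group (cyclic of order $e_nf_n$) which is a subquotient of a fixed structure: the tame quotient of the absolute Galois group of $K_\pfrak$ is $\widehat{\ZZ}^{(p')}\rtimes\widehat{\ZZ}$, and the $p$-part of any finite tame cyclic quotient has bounded ramification. More concretely: in a cyclic tame extension of local fields of degree $e f$ with $e=p^{a}$, the prime-to-$p$ part $e'$ of the ramification must divide $|\Ocal_K/\pfrak|^{f}-1$ but the $p$-part $e=p^a$ of the ramification divides $q^{f}-1$ only if... — actually the cleanest route is: $e_n\mid q^{f_n}-1$ where $q=|\Ocal_K/\pfrak|$ (tame ramification index divides $q^{f_n}-1$), and simultaneously the local degree $e_nf_n$ divides $[K_n:K]=p^{N}$, forcing $f_n$ to be a power of $p$ as well.

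The main obstacle, and where I would concentrate effort, is combining these two divisibility constraints to get $e_n=1$. We have $e_n=p^{a_n}$ dividing $q^{f_n}-1$, with $f_n=p^{b_n}$ a power of $p$ and $q=|\Ocal_K/\pfrak|$ coprime to $p$. I claim $a_n$ is bounded: the multiplicative order of $q$ modulo $p^{a}$ is at least $p^{a-a_0}$ for $a$ large, where $p^{a_0}$ is the exact power of $p$ dividing $q^{p-1}-1$ (a lifting-the-exponent estimate, valid for $p$ odd; for $p=2$ a small variant holds). Since $e_n\mid q^{f_n}-1$ means $\mathrm{ord}_{p^{a_n}}(q)\mid f_n=p^{b_n}$, we get $p^{a_n-a_0}\leq p^{b_n}$, i.e. $a_n\leq b_n+a_0$. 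But we also need to bound $b_n$: here is where the tower being a tower of \emph{distinct} cyclic $p$-extensions of $K$ (not just a tower of cyclic extensions of each other) is used, together with the fact that the decomposition group $D_n$ is cyclic — so $e_nf_n=p^{a_n+b_n}\leq|D_n|\leq$ ... hmm, $|D_n|$ can be as large as $[K_n:K]$. So instead I would pass to the inverse limit: $I_\infty=\varprojlim I_n$ is a procyclic pro-$p$ subgroup of the inertia subgroup of $\Gal(K_\infty/K)$, and since $\pfrak$ is tame, the tame quotient argument shows the image of inertia in $\Gal(K_\infty/K)$ — an abelian pro-$p$ group — must be a quotient of $\ZZ^{(p)}_{p'}$, i.e. trivial, because a pro-$p$ group that is procyclic and a continuous quotient of $\widehat{\ZZ}^{(p')}$ is trivial. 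Unwinding, $I_\infty=1$, hence $e_n=1$ for all $n$, hence $\pfrak$ is unramified in every $K_n$ and therefore in $K_\infty$. I expect the delicate bookkeeping to be in making the "tame inertia is prime-to-$p$, but here it must be pro-$p$, hence trivial" argument fully rigorous at the level of finite layers rather than appealing to the full local Galois group.
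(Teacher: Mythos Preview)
Your final argument contains a genuine error, not merely a gap in rigor. You claim that the pro-$p$ image of inertia in $\Gal(K_\infty/K)$ must be trivial because it is a quotient of tame inertia ``$\widehat{\ZZ}^{(p')}$''. But tame inertia is $\prod_{\ell\neq\ell_0}\ZZ_\ell$ where $\ell_0$ is the \emph{residue characteristic} of $\pfrak$, not the prime $p$ of the tower; the hypothesis is precisely that $\ell_0\neq p$, so tame inertia has a $\ZZ_p$ factor and nontrivial totally tamely ramified pro-$p$ extensions of $K_\pfrak$ do exist (they are simply not abelian over $K_\pfrak$). The ingredient you are missing is exactly the abelian condition: in a tamely ramified \emph{abelian} local extension, any Frobenius lift commutes with inertia, while conjugation by Frobenius on tame inertia is the $q$-power map ($q=|\Ocal_K/\pfrak|$); hence $e_n\mid q-1$, a uniform bound independent of $n$. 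Since $\Gal(K_\infty/K)\cong\ZZ_p$ is torsion-free, the resulting finite inertia subgroup is then forced to be trivial, and your overall plan would go through. You were circling this when you began analyzing the cyclic decomposition group, but broke off before exploiting the commutativity.

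The paper's proof is different and more elementary, avoiding any appeal to the structure of local tame inertia. After refining so that $[K_n:K_{n-1}]=p$, it observes that subgroups of the cyclic $p$-group $\Gal(K_n/K)$ are totally ordered, so the only intermediate fields of $K_n/K$ are $K_0,\dots,K_n$ themselves. The inertia fixed field $K_n^{I_{\pfrak_n/\pfrak}}$ must therefore equal some $K_m$; since $\pfrak$ is unramified there and totally ramified above it, $m$ is pinned to the fixed level $n_0$ at which ramification first appears, independently of $n$. Consequently $f_n=f_{n_0}\leq p^{n_0}$ stays bounded while $e_n=p^{n-n_0}\to\infty$, and the tame bound $e_n\leq q^{f_n}-1$ from Lemma~\ref{RamBoundLem} gives the contradiction directly. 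This neatly dissolves the obstacle you identified of bounding $f_n$: rather than estimating it, the cyclic tower structure freezes it.
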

\begin{proof}
Since a quotient of a cyclic $p$-group is another cyclic $p$-group, without loss of generality we may insert intermediate fields and reindex to ensure that $[K_{n}:K_{n-1}]=p$ for all $n\geq1$.  Since $\Gal(K_n/K)$ is a cyclic $p$-group, its subgroups are totally ordered by inclusion, and thus the same is true of intermediate fields $K\subseteq F\subseteq K_n$. In particular, the fields $K=K_0\subset K_1\subset\dots\subset K_n$ are the only subfields of $K_n$ containing $K$.

Contrary to what has been claimed, assume that $\pfrak$ has residue characteristic not equal to $p$ and that $\pfrak$ ramifies (hence tamely ramifies) in $K_\infty$.  Let $\pfrak_0=\pfrak$, and for each $n\geq 1$, let $\pfrak_n$ be a prime of $K_n$ lying over $\pfrak_{n-1}$.  Let $n_0$ be maximal with the property that $\pfrak$ is unramified in $K_{n_0}$; thus $\pfrak_{n_0}$ is ramified in $K_{n_0+1}$.  Let $n>n_0$ be arbitrary, and define $F_\pfrak=K_n^{I_{\pfrak_n/\pfrak}}$, the fixed field of the inertia subgroup $I_{\pfrak_n/\pfrak}$ of $\Gal(K_n/K)$.  In particular, $\pfrak$ is unramified in $F_\pfrak$, and if $\pfrak'$ denotes any prime of $F_\pfrak$ lying over $\pfrak$, then $\pfrak'$ is totally ramified in $K_n$.  Since we must have $F_\pfrak=K_{m}$ for some $0\leq m\leq n$, the only possibility is $F_\pfrak=K_{n_0}$.  

To summarize, we have shown that $\pfrak$ is unramified in $K_{n_0}$, and that $\pfrak_{n_0}$ is totally ramified in $K_n$ for all $n>n_0$.  In particular, we have
\begin{equation}\label{eAndfBounds}
\begin{split}
f(\pfrak_n/\pfrak) & = f(\pfrak_{n_0}/\pfrak) \leq [K_{n_0}:K] = p^{n_0} \\
e(\pfrak_n/\pfrak) & = e(\pfrak_{n}/\pfrak_{n_0}) = [K_{n}:K_{n_0}] = p^{n-n_0}. 
\end{split}
\end{equation}
But for large enough $n$, (\ref{eAndfBounds}) contradicts the bound
$$
e(\pfrak_n/\pfrak)\leq |\Ocal_K/\pfrak|^{f(\pfrak_n/\pfrak)}-1.
$$
which follows from Lemma~\ref{RamBoundLem}.
\end{proof}

\begin{thm}\label{StableNPCFCaseThm}
Let $\phi(x)\in K[x]$ be a quadratic polynomial which is not postcritically finite, let $\alpha \in K$, and assume that the pair $(\phi,\alpha)$ is stable.  Then $\Gal(K_\infty/K)$ is nonabelian.
\end{thm}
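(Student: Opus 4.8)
The plan is to argue by contradiction: assume $\Gal(K_\infty/K)$ is abelian and derive a contradiction with the ramification result of Bridy et al.\ \cite{MR3762687}, which guarantees that a non-postcritically-finite polynomial has infinitely many primes of $K$ ramifying in $K_\infty$ --- in particular, some prime $\pfrak$ of odd residue characteristic ramifies. The strategy is to show that, under the stability and abelian hypotheses, the tower $K=K_0\subset K_1\subset\cdots$ is (after possibly passing to a suitable sub-tower) a tower of cyclic $2$-extensions, so that Lemma~\ref{CyclicTowerRamLem} forces every prime with odd residue characteristic to be unramified in $K_\infty$, a contradiction.

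The heart of the argument is the cyclicity of the layers. Since $(\phi,\alpha)$ is stable and $\phi$ is quadratic, for each $n$ the group $G_n=\Gal(K_n/K)$ acts faithfully and transitively on $\phi^{-n}(\alpha)$, a set of size $2^n$, so $G_n$ embeds as an abelian transitive subgroup of $S_{2^n}$. By Lemma~\ref{CyclicSnLem}, such a subgroup is cyclic provided it is \emph{not} contained in $A_{2^n}$. So the first task is to rule out $G_n\subseteq A_{2^n}$ for all large $n$; equivalently, I would show that for every $n$ there exists $m\geq n$ with $G_m\not\subseteq A_{2^m}$. This is where Lemma~\ref{DiscIterateLem} enters: the discriminant identity
$$
\disc(\phi^n) = 2^{2^n}A^{2^{2n-1}-1}\disc(\phi^{n-1})^2\,\phi^n(c)
$$
(for $n\geq 2$) relates the parity of $\Gal(K_n/K)$ inside $S_{2^n}$ to whether $\disc(\phi^n(x)-\alpha)$ is a square in $K$. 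Since $\disc(\phi^{n-1})^2$ is already a square and $2^{2^n}$ is an even power of $2$, the non-square part of $\disc(\phi^n)$ is governed by $A^{2^{2n-1}-1}$ (i.e.\ essentially $A$, since the exponent is odd) times $\phi^n(c)$, the critical orbit value. A quick reduction conjugates $\phi$ over $K$ to a monic polynomial (replacing $\phi$ by $A\phi(x/A)$ or similar, which does not change whether $K_\infty/K$ is abelian and, by Proposition~\ref{AbelConjProp}, we are free to do this over $K^\ab$), so the square class of $\disc(\phi^n(x)-\alpha)$ is controlled by $\phi^n(c)-\alpha$, equivalently by the forward orbit of the critical point under the twisted map. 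Because $\phi$ is \emph{not} postcritically finite, the orbit $c,\phi(c),\phi^2(c),\dots$ is infinite, and one must show that $\phi^n(c)-\alpha$ fails to be a square in $K^{\times}$ (up to the controlled factors) for infinitely many $n$ --- otherwise a height or valuation argument would force the critical orbit to be finite.

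Assuming the parity step succeeds, the rest is bookkeeping: for infinitely many $n$ we have $G_n\not\subseteq A_{2^n}$, hence $G_n$ is cyclic of order $2^n$ by Lemma~\ref{CyclicSnLem}; since every $G_n$ is a quotient of $G_\infty=\Gal(K_\infty/K)$ and the inverse limit of cyclic $2$-groups along surjections (with the orders tending to infinity) is $\ZZ_2$, the tower consists --- after reindexing to make each step degree $2$, which Lemma~\ref{CyclicTowerRamLem} already anticipates --- of cyclic $2$-extensions. Applying Lemma~\ref{CyclicTowerRamLem} with $p=2$ shows that no prime $\pfrak$ of $K$ with odd residue characteristic ramifies in $K_\infty$, contradicting \cite{MR3762687}. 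Therefore $\Gal(K_\infty/K)$ is nonabelian.

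The main obstacle I anticipate is the parity step --- showing that infinitely many of the iterated discriminants $\disc(\phi^n(x)-\alpha)$ are non-squares, i.e.\ that one cannot have $\phi^n(c)\equiv\text{(fixed square class)}$ for all large $n$. A clean way to handle it: if $\phi^n(c)-\alpha$ (times the controlled unit/power-of-$A$ factor) were a square in $K^\times$ for all $n\geq N$, then $\phi^{n+1}(c)-\alpha = \phi(\phi^n(c))-\alpha$ would relate two consecutive orbit values by a polynomial identity forcing the orbit of $c$ to be constrained enough to be finite (using that squares in a number field have bounded valuations away from finitely many primes, and a height comparison as the orbit grows), contradicting the non-PCF hypothesis. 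Making this rigorous --- likely via a local argument at a prime where $\phi$ has good reduction and $|\phi^n(c)|$ is large, or via the product formula --- is the step requiring genuine care; everything else follows formally from the lemmas already established in $\S\ref{AlgLemSect}$ and the cited ramification theorem.
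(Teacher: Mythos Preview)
Your overall architecture matches the paper's proof exactly: assume abelian, use Lemma~\ref{ActionLem} to get $[K_n:K]=2^n$, use Lemma~\ref{CyclicSnLem} to reduce to showing $\Gal(K_n/K)\not\subseteq A_{2^n}$ for arbitrarily large $n$, then apply Lemma~\ref{CyclicTowerRamLem} with $p=2$ to contradict Bridy et al.\ \cite{MR3762687}. You have also correctly located the one nontrivial step --- the ``parity step'' --- and correctly translated it via Lemma~\ref{DiscIterateLem} into the statement that $A\psi^n(c)$ cannot be a square in $K$ for all large $n$, where $\psi(x)=\phi(x+\alpha)-\alpha$.

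The gap is in your proposed mechanism for the parity step. The height/valuation sketch you give does not work: squares in $K^\times$ have \emph{even} valuations, not bounded ones, and there is no height obstruction to an infinite sequence of elements of $K$ all being squares. The recurrence $\psi^{n+1}(c)=\psi(\psi^n(c))$ by itself imposes no useful constraint on square classes. The paper (following Boston--Jones \cite{MR2520459}) instead observes that stability forces $\psi^3(x)$ to have eight distinct roots, so $C:\{y^2=A\psi^3(x)\}$ is a smooth hyperelliptic curve of genus $3$. If $A\psi^n(c)=A\psi^3(\psi^{n-3}(c))$ were a square for all large $n$, then the points $\psi^{n-3}(c)$ would give infinitely many distinct $x$-coordinates of points in $C(K)$ (distinct precisely because $\phi$ is not PCF), contradicting Faltings' theorem. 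This is the missing idea; once you insert it, your proof is complete and coincides with the paper's.
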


\begin{proof}
Let $\phi(x)\in K[x]$ be a quadratic polynomial which is not postcritically finite, let $\alpha \in K$, assume that the pair $(\phi,\alpha)$ is stable, and assume that $\Gal(K_\infty/K)$ is abelian; we will obtain a contradiction.  

We first prove that $\Gal(K_n/K)$ is cyclic for all $n\geq1$.  To see this, note first that the stability and abelian hypotheses imply via Lemma~\ref{ActionLem} that $[K_n:K]=2^n$ for all $n\geq1$.  It suffices to show that $\Gal(K_n/K)$ is cyclic for arbitrarily large $n$, because if $\Gal(K_n/K)$ is cyclic then so are its quotients $\Gal(K_{m}/K)$ for $1\leq m<n$.  By the stability hypothesis and Lemma~\ref{CyclicSnLem}, it suffices to show, for arbitrarily large $n$, that $\Gal(K_n/K)$ is not contained in $A_{2^n}$ when viewed as a subgroup of $S_{2^n}$ via its action on the roots of $\phi^n(x)-\alpha$.  Suppose on the contrary that $\Gal(K_n/K)\subseteq A_{2^n}$ for all sufficiently large $n$.  By a well-known exercise in elementary Galois theory, this means that $\disc(\phi^n(x)-\alpha)$ is a square in $K$ for all sufficiently large $n$.  Letting $\psi(x)=\phi(x+\alpha)-\alpha$, using Lemma~\ref{DiscIterateLem} we have
$$
\disc(\phi^n(x)-\alpha) = \disc(\phi^n(x+\alpha)-\alpha)=\disc(\psi^n(x))=R_n^2A\psi^n(c)
$$
for all $n\geq2$, where $A,R_n\in K$ are nonzero and where $c$ is the critical point of $\psi(x)$.  In particular, $A\psi^n(c)$ is a square in $K$ for all sufficiently large $n$.  

The pair $(\psi,0)$ is stable by the stability assumption on the pair $(\phi,\alpha)$.  In particular, the degree $8$ polynomial $\psi^3(x)$ has eight distinct roots in $\Kbar$, and thus $C=\{y^2=A\psi^3(x)\}$ is a smooth hyperelliptic curve of genus $3$.  There are infinitely many $n\geq3$ for which $A\psi^n(c)$ is a square in $K$ and hence for which $\psi^{n-3}(c)$ is the $x$-coordinate of a $K$-rational point on $C$.  Moreover, these points are distinct by the postcritically infinite hypothesis on $\phi$ (and hence on $\psi$ as well).  This violation of Faltings theorem provides a contradiction, and thus the assumption $\Gal(K_n/K)\subseteq A_{2^n}$ for all large enough $n$ is false.  As explained above, by Lemma~\ref{CyclicSnLem} this completes the proof that $\Gal(K_n/K)$ is cyclic for all $n\geq1$.  

We now apply the $p=2$ case of Lemma~\ref{CyclicTowerRamLem}, which implies that no primes $\pfrak$ of $K$ with odd residue characteristic can ramify in $K_\infty$.  However, this violates a theorem of Bridy et. al. \cite{MR3762687}, which states that if $K_\infty$ is generated over $K$ by the preimage tree associated to a postcritically infinite rational map, then infinitely many primes of $K$ ramify in $K_\infty$.  The contradiction completes the proof of the theorem.
\end{proof}

The use of Falting's theorem to limit the number of squares in the critical orbit of a polynomial is borrowed from Boston-Jones \cite{MR2520459}.  In fact, Theorem~\ref{StableNPCFCaseThm} may be viewed as a generalization of Theorem 3.1 of \cite{MR2520459}, in the sense that our result implies that the hypotheses of that theorem can never be satisfied.

\section{Heights and Bogomolov constants}\label{HeightsSect}

We recall the definition of the absolute Weil height function $h:\Kbar\to\RR$ for a number field $K$.  For each finite extension $L/K$, denote by $M_L$ the set of places of $L$, and for each place $v$ let $|\cdot|_v$ be a corresponding absolute value normalized so that it coincides with either the standard real or $p$-adic absolute value when restricted to $\QQ$.  Given $\alpha\in\Kbar$, Let $L/K$ be a finite extension containing $\alpha$, and define 
\begin{equation}\label{WeilHeightDef}
h(\alpha) = \sum_{v\in M_L}r_v\log^+|\alpha|_v
\end{equation}
where $r_v=[L_v:\QQ_v]/[L:\QQ]$ and $\log^+t=\log\max(1,t)$.  Standard arguments show that this definition is independend of the choice of $L$, and that $h(\alpha)\geq0$ for all $\alpha\in \Kbar$, with $h(\alpha)>0$ unless $\alpha$ is zero or a root of unity.  It is immediate from the definition that $h(\zeta\alpha)=h(\alpha)$ for all roots of unity $\zeta$, and that $h(\alpha^n)=|n|h(\alpha)$ for all $n\in\ZZ$.

Given a field $K\subseteq L\subseteq\Kbar$ (with $L/K$ not necessarily a finite extension), define the Bogomolov constant of $L$ by
$$
B_0(L) = \liminf\{h(\alpha)\mid \alpha\in L\text{ and }h(\alpha)>0\}.
$$
In other words, $B_0(L)$ is the unique extended real number $[0,+\infty]$ with the property that the set $\{\alpha\in L \mid 0<h(\alpha)\leq B\}$ is finite for all $B<B_0(L)$ and infinite for all $B>B_0(L)$.  
 
\begin{thm}[Amoroso-Zannier \cite{MR1817715}]\label{AmorosoZannierBound}
If $L/K^\ab$ is a finite extension of degree $D=[L:K^\ab]$, then $h(\alpha)\geq C_{K,D}>0$ for all nonzero, non-root of unity $\alpha\in L$, where $C_{K,D}$ is a constant depending only on $K$ and $D$.  In particular, $B_0(L)\geq C_{K,D}>0$.
\end{thm}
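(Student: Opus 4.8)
The second assertion, $B_0(L)\geq C_{K,D}>0$, follows at once from the definition of the Bogomolov constant once the uniform bound $h(\alpha)\geq C_{K,D}$ has been established for every nonzero $\alpha\in L$ that is not a root of unity; so the plan is to prove that bound. The minimal polynomial of such an $\alpha$ over $K^\ab$ has degree at most $D$, and its coefficients generate a finite abelian extension $K'$ of $K$; thus $\alpha$ lies in the number field $L_0:=K'(\alpha)$ with $[L_0:K']\leq D$, and one needs an estimate for $h(\alpha)$ that is uniform over all such pairs $(K',L_0)$. The strategy is a \emph{relative Dobrowolski argument over $K^\ab$} --- in the spirit of Dobrowolski's classical lower bound for the height and of Amoroso--Dvornicich's bound $h(\alpha)\geq\tfrac{\log 5}{12}$ for $\alpha\in\QQ^\ab$ --- in which one plays a $p$-adic divisibility statement off against an archimedean size estimate, the crucial gain being that over $K^\ab$ a Frobenius substitution is available at essentially \emph{every} rational prime, rather than only the bounded-degree input exploited in the classical argument. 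After passing to the Mahler measure formulation of $h$, which handles non-integral $\alpha$ uniformly, it suffices to carry this out.

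Consider first the case $D=1$, so that $L_0=K'\subseteq K^\ab$ and $G:=\Gal(L_0/K)$ is abelian. For all but finitely many rational primes $p$, and each degree-one prime $\pfrak$ of $K$ above $p$ unramified in $L_0$, the Frobenius $\sigma_\pfrak\in G$ satisfies $\sigma_\pfrak(x)\equiv x^{p}\pmod{\mathfrak{q}}$ for all $x\in\Ocal_{L_0}$ and every prime $\mathfrak{q}$ of $L_0$ above $\pfrak$. If the equality $\sigma_\pfrak(\alpha)=\alpha^{p}$ ever held, then applying $\sigma_\pfrak$ repeatedly would give $\alpha=\sigma_\pfrak^{n}(\alpha)=\alpha^{p^{n}}$ with $n=\ord(\sigma_\pfrak)$, forcing $\alpha$ to be a root of unity, contrary to hypothesis; hence $\delta_\pfrak:=\sigma_\pfrak(\alpha)-\alpha^{p}$ is a nonzero element of $L_0$ divisible by every prime of $L_0$ above $\pfrak$. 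On the one hand, this divisibility forces a lower bound involving $\log p$ on the normalized height $h(\delta_\pfrak)$ (equivalently, on the absolute norm of a suitable integral multiple of $\delta_\pfrak$); on the other hand, Galois invariance of the height gives $h(\delta_\pfrak)\leq h(\sigma_\pfrak(\alpha))+h(\alpha^{p})+\log 2=(p+1)\,h(\alpha)+\log 2$.

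Balancing these two estimates over a range of primes $p\leq T$ and summing --- here one also uses, as in Dobrowolski's original argument, a refinement involving congruences to higher powers of $p$, together with a careful treatment of the finitely many bad primes so that their contribution does not depend on $L_0$ --- yields an inequality roughly of the form $c_0(K)\,T/\log T\leq c_1(K,D)\,T\,h(\alpha)+c_2(K,D)\log T$; choosing $T$ comparable to a fixed negative power of $h(\alpha)$ and using the counting of (usable) primes up to $T$ then forces $h(\alpha)\geq C_{K,D}>0$. The passage from $D=1$ to general $D$ runs the same machine with the \emph{relative} resultant of $\alpha$ over $K'$, using that a degree-$D$ extension cannot destroy too much of the favourable prime splitting in $K'/\QQ$. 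I expect the genuine difficulty to be precisely this quantitative bookkeeping: extracting enough $p$-power divisibility from the numbers $\delta_\pfrak$ while keeping the archimedean losses small enough that, after the telescoping over primes, the resulting constant depends only on $K$ and $D$ and not on the particular (possibly very large) abelian extension $K'$. This is exactly the content of the Amoroso--Zannier theorem, and for the complete proof we refer to \cite{MR1817715}.
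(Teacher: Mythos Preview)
The paper does not prove this theorem at all: it is stated purely as a citation of Amoroso--Zannier \cite{MR1817715}, with no proof or sketch given, and is used as a black box in the arguments of \S\ref{PowerChebSect} and \S\ref{AZPairingSect}. So there is no ``paper's own proof'' to compare against; your proposal goes strictly beyond what the paper does.

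As for the content of your sketch: the outline is in the right spirit (Frobenius congruences modulo primes of an abelian extension, the nonvanishing of $\sigma_\pfrak(\alpha)-\alpha^{p}$, balancing $p$-adic gain against archimedean loss), and your closing sentence correctly defers to \cite{MR1817715} for the actual work. One caution if you want this to stand as a genuine sketch rather than a heuristic: the simple element $\delta_\pfrak=\sigma_\pfrak(\alpha)-\alpha^{p}$ and the inequality $h(\delta_\pfrak)\leq(p+1)h(\alpha)+\log 2$ are the ingredients of the Amoroso--Dvornicich argument over $\QQ^\ab$, but they do not by themselves yield a bound depending only on $K$ and $D$ in the relative setting---the archimedean side grows linearly in $p$ while the $p$-adic side gives only $\log p$, so summing over $p\leq T$ does not close. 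The actual Amoroso--Zannier proof, like Dobrowolski's, builds an auxiliary polynomial (via an interpolation/Siegel-type construction) vanishing to high order at the conjugates of $\alpha$, and it is the extrapolation of this polynomial at $p$-th powers that produces enough $p$-adic divisibility to beat the archimedean growth. Your phrase ``a refinement involving congruences to higher powers of $p$'' gestures at this, but the auxiliary-function machinery is the essential missing idea, not a refinement.
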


This result generalizes a result of Amoroso-Dvornicich \cite{MR1740514}, which states that $h(\alpha)\geq(\log5)/12$ for all nonzero, non-root of unity $\alpha\in \QQ^\ab$.  In particular, $B_0(\QQ^\ab)\geq (\log5)/12$.  For our purposes, another useful result from the paper \cite{MR1740514} is the following.  For each $k\geq1$, let $\zeta_k$ be a primitive $k$-th root of unity in $\CC$, and let $\mu_k$ be the group of all $k$-th roots of unity in $\CC$.  Let $\mu_{2^\infty}=\cup_{m\geq1}\mu_{2^m}$; thus $ \QQ(\mu_{2^\infty})=\cup_{m\geq1}\QQ(\zeta_{2^m})$.  

\begin{thm}[Amoroso-Dvornicich \cite{MR1740514}]\label{AD2Theorem}
If $\alpha\in  \QQ(\mu_{2^\infty})$ is nonzero and not a root of unity, then 
$h(\alpha)\geq(\log2)/4$.  In particular, $B_0( \QQ(\mu_{2^\infty}))\geq (\log2)/4$.
\end{thm}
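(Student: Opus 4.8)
The plan is to reconstruct the argument of Amoroso--Dvornicich, which is the prototype for all later lower bounds on heights in abelian extensions (cf.\ Theorem~\ref{AmorosoZannierBound}). The engine is a congruence special to abelian extensions: if $L/\QQ$ is abelian and $p$ is a prime unramified in $L$, then the automorphism $\sigma_p\in\Gal(L/\QQ)$ raising each root of unity to its $p$-th power satisfies $\sigma_p(x)\equiv x^p\pmod{\qfrak}$ simultaneously for \emph{every} prime $\qfrak$ of $L$ above $p$, not merely for one of them (in the non-abelian case one gets this only at a single prime, after a conjugation).

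First I would set up the framework. Any $\alpha\in\QQ(\mu_{2^\infty})$ lies in $\QQ(\zeta_{2^m})$ for some $m$; put $K=\QQ(\alpha)$, $d=[K:\QQ]$, and recall that $h(\alpha)=\tfrac1d\log M(\alpha)$, where $M(\alpha)$ is the Mahler measure of the primitive minimal polynomial of $\alpha$ (so that the finite places contribute only non-negatively). The essential case is that $\alpha$ is a nonzero algebraic integer, not a root of unity, and I will describe that case; the general case runs along the same lines, carrying denominator ideals through the estimates. The structural fact that makes the field $\QQ(\mu_{2^\infty})$ special here is that $2$ is totally ramified in $\QQ(\zeta_{2^m})$, hence in $K$: there is a unique prime $\pfrak$ of $K$ above $2$, with $e(\pfrak/2)=d$, $f(\pfrak/2)=1$, and residue field $\FF_2$.

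Next comes the arithmetic step. Fix a small odd prime $p$ and restrict $\sigma_p$ to $K$. By the congruence recalled above, $\sigma_p(\alpha)\equiv\alpha^p\pmod{\qfrak}$ for every prime $\qfrak$ of $K$ above $p$; and since $\sigma_p$ has finite order while $\alpha\neq0$, the equality $\sigma_p(\alpha)=\alpha^p$ would iterate to $\alpha^{p^k-1}=1$, forcing $\alpha$ to be a root of unity. Hence $\beta:=\sigma_p(\alpha)-\alpha^p$ is a nonzero algebraic integer of $K$ divisible by the ideal $\prod_{\qfrak\mid p}\qfrak$, which has norm $p^{d}$, so $|N_{K/\QQ}(\beta)|\geq p^{d}$. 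On the archimedean side, each conjugate of $\beta$ has absolute value at most $|\sigma_p(\alpha)|+|\alpha|^{p}$, and bounding the resulting product by Mahler measures gives $p^{d}\leq|N_{K/\QQ}(\beta)|\leq 2^{d}M(\alpha)^{p+1}$, whence $h(\alpha)\geq\frac{\log(p/2)}{p+1}>0$.

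This soft argument already produces a positive, $d$-independent constant, but the main obstacle is to push it up to the sharp value $(\log2)/4$. The crude estimate wastes a factor $2^{d}$ that is very extravagant: a conjugate $\sigma(\alpha)$ and its $p$-th power are almost never both of absolute value exceeding $1$. To recover the lost factor one runs a Dobrowolski-style auxiliary-polynomial argument: build $F\in\ZZ[x]$ vanishing to high order at all conjugates of $\alpha$ via Siegel's lemma (possible exactly because the small height of $\alpha$ keeps its conjugates clustered), then force additional vanishing of $F$ modulo $\pfrak$ — here one exploits the residue field $\FF_2$ together with the divisibility behaviour of $\alpha^{2^k}-1$ against $v_\pfrak(2)=d$ — and modulo several small odd primes via the congruences above, until the number of zeros of $F$ counted with multiplicity exceeds $\deg F$, a contradiction unless $h(\alpha)\geq(\log2)/4$. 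The bookkeeping and the parameter optimisation that deliver exactly $(\log2)/4$ are the technical heart of the proof. Granting the height bound, the assertion $B_0(\QQ(\mu_{2^\infty}))\geq(\log2)/4$ is immediate from the definition of the Bogomolov constant.
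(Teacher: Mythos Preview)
Your soft estimate using Frobenius at an odd prime $p$ is fine and gives $h(\alpha)\geq\frac{\log(p/2)}{p+1}$, but your plan for reaching the sharp constant $(\log 2)/4$ is misguided. You assert that a Dobrowolski-style auxiliary-polynomial construction (Siegel's lemma, zero-counting, parameter optimisation) is needed, and you leave this entirely unexecuted. In fact no such machinery is required: the paper's proof is a direct product-formula argument of the same flavour as your soft step, but carried out at the \emph{ramified} prime $2$ rather than at an odd unramified prime.

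Here is what you are missing. Working in $\QQ(\zeta_{2^m})$ with $m$ chosen minimally (so that no root-of-unity twist of $\alpha$ drops to $\QQ(\zeta_{2^{m-1}})$), take the automorphism $\sigma$ sending $\zeta_{2^m}\mapsto-\zeta_{2^m}$ and set $\gamma=\sigma(\alpha)^2-\alpha^2$. The minimality hypothesis forces $\gamma\neq0$. Writing $x=\sum_j a_j\zeta_{2^m}^j\in\ZZ[\zeta_{2^m}]$ one has $\sigma(x)^2-x^2=-4(\sum_{2\nmid j}a_j\zeta_{2^m}^j)(\sum_{2\mid j}a_j\zeta_{2^m}^j)$, so $|\sigma(x)^2-x^2|_v\leq 1/4$ at the place $v\mid 2$; a denominator-clearing trick extends this to $|\gamma|_v\leq\tfrac14\max(1,|\alpha|_v)^2\max(1,|\sigma(\alpha)|_v)^2$. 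At archimedean places the trivial bound has constant $2$, and the product formula yields $0\leq 4h(\alpha)-\log 4+\log 2$, i.e.\ $h(\alpha)\geq(\log 2)/4$ exactly. The whole proof is a page of elementary inequalities; there is no Siegel's lemma, no auxiliary polynomial, and no zero-counting. Your proposal in its current form is not a proof of the stated bound, and the route you sketch toward the sharp constant does not reflect the actual argument.
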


Basically all of the ideas needed to prove this result are present in Proposition 2 of \cite{MR1740514}, which treats the cyclotomic fields $\QQ(\zeta_k)$ for $4\mid k$.  The statement of the height bound in that result excludes certain elements of $\QQ(\zeta_k)$, but we can easily recover the bound for these excluded elements in the special case that $k=2^m$.  As it may be of some interest, we include the complete proof in this case.

\begin{proof}[Proof of Theorem~\ref{AD2Theorem}]
If $\zeta\alpha\in\QQ$ for some root of unity $\zeta\in \QQ(\mu_{2^\infty})$, then $\zeta\alpha\notin\{0,\pm1\}$ and so $h(\alpha)=h(\zeta\alpha)\geq\log2>\frac{\log2}{4}$.  Thus we may assume that $\zeta\alpha\notin\QQ$ for all roots of unity $\zeta\in \QQ(\mu_{2^\infty})$.  Let $m$ be the smallest positive integer with the property that $\zeta\alpha\in\QQ(\zeta_{2^m})$ for some root of unity $\zeta\in \QQ(\mu_{2^\infty})$; thus $m\geq2$ by assumption.  Since $h(\zeta\alpha)=h(\alpha)$, without loss of generality we may just assume that $\alpha\in \QQ(\zeta_{2^m})$ and that $\zeta\alpha\notin\QQ(\zeta_{2^{m-1}})$ for all roots of unity $\zeta\in \QQ(\mu_{2^\infty})$.

Write $\Gal(\QQ(\zeta_{2^m})/\QQ(\zeta_{2^{m-1}}))=\{1,\sigma\}$; thus $\sigma(\zeta_{2^m})=-\zeta_{2^m}$.  Set 
$$
\gamma=\sigma(\alpha)^2-\alpha^2.
$$
Note that $\gamma\neq0$ as otherwise either $\sigma(\alpha)=\alpha$ or $\sigma(\alpha)=-\alpha$; the former case implies $\alpha\in \QQ(\zeta_{2^{m-1}})$, and the latter case implies $\zeta_{2^m}\alpha\in \QQ(\zeta_{2^{m-1}})$, both of which are forbidden by assumption.

If $v$ is a place of $\QQ(\zeta_{2^m})$, then 
\begin{align}
\label{GammaPlaceBound1} |\gamma|_v & \leq \max(1,|\alpha|_v)^2\max(1,|\sigma(\alpha)|_v)^2 & & \text{ if } v\nmid2,\infty \\
\label{GammaPlaceBound2} |\gamma|_v & \leq (1/4)\max(1,|\alpha|_v)^2\max(1,|\sigma(\alpha)|_v)^2 & & \text{ if } v\mid2 \\
\label{GammaPlaceBound3}|\gamma|_v & \leq 2\,\max(1,|\alpha|_v)^2\max(1,|\sigma(\alpha)|_v)^2 & & \text{ if } v\mid\infty 
\end{align}
These inequalities and the product formula, together with the fact that $h(\sigma(\alpha))=h(\alpha)$, imply that $0=\sum_vr_v\log|\gamma|_v\leq4h(\alpha)-\log4+\log2$, and the desired bound $h(\alpha)\geq(\log2)/4$ follows.  The bounds (\ref{GammaPlaceBound1}) and (\ref{GammaPlaceBound3}) are trivial applications of the triangle inequality.

It remains only to prove (\ref{GammaPlaceBound2}); thus fix a place $v\mid2$ of $\QQ(\zeta_{2^m})$.  Using Proposition Lemma 4.4.12 of \cite{bombierigubler}, there exists $\beta\in\ZZ[\zeta_{2^m}]$ such that $\alpha\beta\in\ZZ[\zeta_{2^m}]$ and $|\beta|_v=\max(1,|\alpha|_v)^{-1}$.  Note that for arbitrary $x\in\ZZ[\zeta_{2^m}]$, writing $x=\Sigma_ja_j\zeta_{2^m}^j$, since $\sigma(\zeta_{2^m})=-\zeta_{2^m}$ we have 
$$
\sigma(x)^2-x^2=(\sigma(x)-x)(\sigma(x)+x)=-4(\sum_{2\nmid j}a_j\zeta_{2^m}^j)(\sum_{2\mid j}a_j\zeta_{2^m}^j)
$$
and thus $|\sigma(x)^2-x^2|_v\leq1/4$.  We conclude
\begin{equation*}
\begin{split}
|\beta|_v^2|\gamma|_v & = |\beta^2\sigma(\alpha)^2-\alpha^2\beta^2|_v \\
	& = |(\beta^2-\sigma(\beta)^2)\sigma(\alpha)^2+\sigma(\alpha\beta)^2-(\alpha\beta)^2|_v \\
	& \leq \max\big(|\beta^2-\sigma(\beta)^2|_v|\sigma(\alpha)|_v^2,|\sigma(\alpha\beta)^2-(\alpha\beta)^2|_v\big) \\
	& \leq \max\big((1/4)|\sigma(\alpha)|_v^2,1/4\big) \\
	& =(1/4)\max(1,|\sigma(\alpha)|_v)^2,
\end{split}
\end{equation*}
which is equivalent to (\ref{GammaPlaceBound2}) as $|\beta|_v=\max(1,|\alpha|_v)^{-1}$.
\end{proof}

\section{Powering maps and Chebyshev maps}\label{PowerChebSect}

In a slightly more general framework than what has been described above, in this section we consider pairs $(\phi,\alpha)$, where $\phi(x)\in \Kbar[x]$ is a polynomial and where $\alpha\in\Kbar$.  We define recursively $K_0=K_0(\phi,\alpha)=K(\alpha)$ and $K_n=K_n(\phi,\alpha)=K_{n-1}(\phi^{-n}(\alpha))$ for $n\geq1$, and set $K_\infty(\phi,\alpha)=\cup K_n(\phi,\alpha)$.  Since the requirement that $\phi$ and $\alpha$ are defined over $K$ have been relaxed, $K_0/K$ may be a proper extension and the $K_n/K$ may no longer be Galois extensions.  
 
\begin{prop}\label{AbelConjProp}
Let $K$ be a number field, let $\phi(x),\psi(x)\in \Kbar[x]$ be two polynomials of degree $d\geq2$, and let $\alpha,\beta\in \Kbar$.  
\begin{itemize}
\item[{\bf (a)}] If $(\phi,\alpha)$ is $\Kbar$-conjugate to $(\psi,\beta)$, then $K_\infty(\phi,\alpha)$ is contained in a finite extension of $K^\ab$ if and only if $K_\infty(\psi,\beta)$ is contained in a finite extension of $K^\ab$.
\item[{\bf (b)}] If $\phi(x)$, $\psi(x)$, $\alpha$, $\beta$ are defined over $K^\ab$  and $(\phi,\alpha)$ is $K^\ab$-conjugate to $(\psi,\beta)$, then $K_\infty(\phi,\alpha)/K$ is an abelian extension if and only if $K_\infty(\psi,\beta)/K$ is an abelian extension.
\end{itemize}
\end{prop}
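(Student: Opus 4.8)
The plan is to set up the conjugating map carefully and then track how it interacts with the Galois action. Write $\gamma(x)=ax+b$ for the affine transformation realizing the conjugacy, so that $\psi=\gamma\circ\phi\circ\gamma^{-1}$ and $\beta=\gamma(\alpha)$. The crucial observation is that conjugation intertwines the dynamical systems level by level: an easy induction shows $\psi^n=\gamma\circ\phi^n\circ\gamma^{-1}$ for all $n\geq1$, hence $\psi^{-n}(\beta)=\gamma(\phi^{-n}(\alpha))$, i.e. the $n$-th preimage set of $(\psi,\beta)$ is the image under $\gamma$ of the $n$-th preimage set of $(\phi,\alpha)$. Consequently, if $L$ is any field containing the coefficients $a,b$ of $\gamma$ (and its inverse), then $L(\phi^{-n}(\alpha))=L(\psi^{-n}(\beta))$ for every $n$, and therefore $L\cdot K_\infty(\phi,\alpha)=L\cdot K_\infty(\psi,\beta)$.

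For part {\bf (a)}: since $a,b\in\Kbar$, there is a finite extension $F/\QQ$ containing them, and we may enlarge $F$ so that $a,b,a^{-1}\in F$. From the previous paragraph, $F\cdot K_\infty(\phi,\alpha)=F\cdot K_\infty(\psi,\beta)$. Now suppose $K_\infty(\phi,\alpha)\subseteq E$ for some finite extension $E/K^\ab$. Then $K_\infty(\psi,\beta)\subseteq F\cdot K_\infty(\psi,\beta)=F\cdot K_\infty(\phi,\alpha)\subseteq FE$, and $FE$ is again a finite extension of $K^\ab$ (it is the compositum of $K^\ab$ with the finite extension $FE/K^\ab$, itself finite since $[FE:K^\ab]\leq[F:\QQ]\cdot[E:K^\ab]<\infty$). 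By symmetry the converse holds, which proves {\bf (a)}.

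For part {\bf (b)}: now $a,b\in K^\ab$, so we may take $F=K(a,b,a^{-1})\subseteq K^\ab$, a \emph{finite abelian} extension of $K$, and we get $K^\ab\cdot K_\infty(\phi,\alpha)=K^\ab\cdot K_\infty(\psi,\beta)=:M$ (absorbing $F$ into $K^\ab$). It suffices to prove that $K_\infty(\phi,\alpha)/K$ is abelian if and only if $M/K$ is abelian; the same equivalence for $(\psi,\beta)$ then finishes the proof by symmetry. One direction is immediate: if $M/K$ is abelian then so is its subextension $K_\infty(\phi,\alpha)/K$. For the converse, suppose $K_\infty(\phi,\alpha)/K$ is abelian. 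Then $M=K^\ab\cdot K_\infty(\phi,\alpha)$ is a compositum of two abelian extensions of $K$, hence abelian over $K$: indeed $\Gal(M/K)$ embeds into $\Gal(K^\ab/K)\times\Gal(K_\infty(\phi,\alpha)/K)$ via restriction (the kernel being trivial because an automorphism fixing both $K^\ab$ and $K_\infty(\phi,\alpha)$ fixes their compositum $M$), and a subgroup of a product of abelian groups is abelian. This gives {\bf (b)}.

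The only mild subtlety — and the step I would be most careful about — is the bookkeeping of which fields are known to be Galois (or abelian) over which: in the generality of this section $K_n/K$ need not be Galois, so one should phrase {\bf (b)} entirely in terms of the infinite extensions $K_\infty(\phi,\alpha)/K$ and the abelian compositum $M/K$, never passing through the individual $K_n$. Everything else is the routine verification that affine conjugacy commutes with iteration and with taking preimages, together with the standard fact that a compositum of abelian extensions is abelian.
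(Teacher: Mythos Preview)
Your argument is correct and follows essentially the same route as the paper: both proofs hinge on the observation that $\gamma$ carries $\phi^{-n}(\alpha)$ bijectively onto $\psi^{-n}(\beta)$, so that adjoining a finite extension $F$ containing the coefficients of $\gamma$ makes the two towers coincide. The paper's treatment of part {\bf (b)} is slightly more direct---once $F\subseteq K^\ab$, the containment $K_\infty(\psi,\beta)\subseteq F\cdot K_\infty(\phi,\alpha)\subseteq K^\ab$ is immediate---whereas you pass through the compositum $M=K^\ab\cdot K_\infty(\phi,\alpha)$ and the restriction embedding into a product of Galois groups; but this is a stylistic rather than a substantive difference.
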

\begin{proof}
{\bf (a)}  There exists a finite extension $F/K$ such that $\phi(x)$, $\psi(x)$, $\alpha$, $\beta$ are all defined over $F$, and extending $F$ if necessary there exists an automorphism $\gamma(x)=ax+b$ defined over $F$ for which $\psi=\gamma\circ\phi\circ\gamma^{-1}$ and $\beta=\gamma(\alpha)$.  Note that for each $n\geq0$, $\gamma$ restricts to a bijection from $\phi^{-n}(\alpha)$ onto $\psi^{-n}(\beta)$.  In particular, it follows that $K_\infty(\psi,\beta)\subseteq FK_\infty(\phi,\alpha)$, and thus if $K_\infty(\phi,\alpha)$ is contained in a finite extension $L$ of $K^\ab$, then $K_\infty(\psi,\beta)$ is contained in the finite extension $LF$ of $K^\ab$.  The reverse implication follows from symmetry.

{\bf (b)}  In the preceding argument, we may take $F\subseteq K^\ab$.  Thus if $K_\infty(\phi,\alpha)\subseteq K^\ab$, then $K_\infty(\psi,\beta)\subseteq K^\ab$ as well, and conversely by symmetry.
\end{proof}

The following two results verify Conjecture~\ref{PolynomialConjecture} in the special case that $\phi(x)$ is $\Kbar$-conjugate to a powering map $x^d$ or to a Chebyshev map $T_d(x)$.

\begin{thm}\label{PoweringTheorem}
Let $\phi(x)\in \Kbar[x]$ be a polynomial of degree $d\geq2$, let $\alpha\in \Kbar$ be a non-exceptional point for $\phi$, and assume that the pair $(\phi,\alpha)$ is $\Kbar$-conjugate to the pair $(x^d,\beta)$ for $\beta\in \Kbar$.  Then $K_\infty(\phi,\alpha)/K$ is an abelian extension if and only if $\beta$ is a root of unity and $(\phi,\alpha)$ is $K^\ab$-conjugate to $(x^d,\beta)$.
\end{thm}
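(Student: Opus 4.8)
The plan is to reduce everything to a height computation on roots of unity. First I would dispose of the backward direction: if $\beta$ is a root of unity and $(\phi,\alpha)$ is $K^\ab$-conjugate to $(x^d,\beta)$, then by Proposition~\ref{AbelConjProp}(b) it suffices to check that $K_\infty(x^d,\beta)/K$ is abelian; but the iterated preimages of a root of unity $\beta$ under $x\mapsto x^d$ are themselves roots of unity, so $K_\infty(x^d,\beta)\subseteq K^\ab$, which is what we want. Conversely, suppose $K_\infty(\phi,\alpha)/K$ is abelian, so in particular $K_\infty(\phi,\alpha)\subseteq K^\ab$. By Proposition~\ref{AbelConjProp}(a), $K_\infty(x^d,\beta)$ is contained in a finite extension of $K^\ab$, say of degree $D$ over $K^\ab$; I want to conclude from this that $\beta$ is a root of unity, and then, using that information, that the conjugacy can be taken over $K^\ab$.

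For the first (and main) step, fix a conjugating map $\gamma(x)=ax+b$ with $\psi=\gamma\phi\gamma^{-1}=x^d$ and $\gamma(\alpha)=\beta$. For each $n$, the set $\phi^{-n}(\alpha)$ maps bijectively onto $(x^d)^{-n}(\beta)$, which consists of the $d^n$ values $\zeta\beta_0$ where $\beta_0$ is a fixed $d^n$-th root of $\beta$ and $\zeta$ ranges over $\mu_{d^n}$. If $\beta$ is not a root of unity, then $h(\beta)>0$, and every such $d^n$-th root $\beta_0$ of $\beta$ satisfies $h(\beta_0)=h(\beta)/d^n$, which tends to $0$. All of these elements lie in $K_\infty(x^d,\beta)$, hence in a fixed finite extension $L$ of $K^\ab$ of degree $D$. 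But Theorem~\ref{AmorosoZannierBound} gives a constant $C_{K,D}>0$ such that every nonzero non-root-of-unity element of $L$ has height at least $C_{K,D}$; since $\beta_0$ is not a root of unity (else $\beta$ would be) and $h(\beta_0)\to0$, we get a contradiction for $n$ large. Therefore $\beta$ is a root of unity. I expect this to be the crux of the argument; the only mild subtlety is making sure $L$ really is a single finite extension of $K^\ab$ independent of $n$, which is exactly what Proposition~\ref{AbelConjProp}(a) provides.

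It remains to upgrade the $\Kbar$-conjugacy to a $K^\ab$-conjugacy once $\beta=\zeta$ is a root of unity. Here I would argue that since $\psi=x^d$ and both $\phi$ and $\alpha$ are defined over $K$, the coefficients $a,b$ of $\gamma$ are constrained: writing out $\gamma\circ\phi=\psi\circ\gamma$ shows that $\phi(x)=\gamma^{-1}(\gamma(x)^d)=a^{-1}((ax+b)^d-b)$, so $\phi$ has leading coefficient $a^{d-1}$ and — comparing the next coefficient or the critical point — one finds $b$ is forced in terms of $a$ and the coefficients of $\phi$; in particular any two valid conjugating maps differ by precomposition with multiplication by a $(d-1)$-th root of unity, so $a^{d-1}\in K$ and $b\in K(a)$. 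Thus $\gamma$ is defined over $K(a)$ with $a^{d-1}\in K$, a Kummer extension, hence abelian over $K$, so $\gamma$ is defined over $K^\ab$; and then $\beta=\gamma(\alpha)\in K^\ab$ as well since $\alpha\in K$. This shows $(\phi,\alpha)$ is $K^\ab$-conjugate to $(x^d,\beta)$, completing the proof.

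(The genuinely delicate point, should the normalization of $\gamma$ be less clean than sketched — e.g. if one must also rule out that the "root of unity" $\beta$ equals $0$, or handle $\beta$ lying in a Kummer-type extension — is handled by the observation that all of $\phi^{-n}(\alpha)$ lands in a fixed finite extension of $K^\ab$, so one may freely pass to that field and invoke Theorem~\ref{AmorosoZannierBound} again; I would therefore organize the write-up so that the height contradiction is the only place where anything nontrivial happens, and the conjugacy-descent step is reduced to elementary bookkeeping about affine conjugators of $x^d$.)
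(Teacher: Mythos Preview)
Your backward direction and the main height argument in the forward direction are correct and essentially identical to the paper's. The gap is in your conjugacy-descent step.

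First, you assume $\phi$ and $\alpha$ are defined over $K$, but the theorem as stated allows $\phi(x)\in\Kbar[x]$ and $\alpha\in\Kbar$; your comparison-of-coefficients argument does not get started without this unwarranted assumption. Second, and more seriously, even granting $\phi\in K[x]$ so that $a^{d-1}\in K$, your claim that $K(a)/K$ is ``a Kummer extension, hence abelian over $K$'' is false in general: a radical extension $K(c^{1/n})/K$ is Kummer (and thus abelian) only when $\mu_n\subseteq K$. For a concrete failure, take $K=\QQ$, $d=4$, and $\phi$ with leading coefficient $2$; then $a^3=2$ forces $a=\sqrt[3]{2}$ (up to a cube root of unity), and $\QQ(\sqrt[3]{2})/\QQ$ is not even Galois, with Galois closure having group $S_3$. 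So $a\in K^\ab$ does not follow from your argument.

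The paper's descent argument avoids coefficients entirely. Once $\beta$ is known to be a root of unity, the backward $x^d$-orbit of $\beta$ consists of roots of unity, so $K_\infty(x^d,\beta)\subseteq K^\ab$; combined with the hypothesis $K_\infty(\phi,\alpha)\subseteq K^\ab$, the conjugacy $\gamma$ bijects one infinite subset of $K^\ab$ onto another. Choosing two distinct points $s_1,s_2$ in the backward $\phi$-orbit of $\alpha$ with images $t_1,t_2$, all four in $K^\ab$, one solves
\[
a=\frac{t_1-t_2}{s_1-s_2},\qquad b=\frac{s_1t_2-s_2t_1}{s_1-s_2},
\]
so $a,b\in K^\ab$ directly. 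This works uniformly regardless of where $\phi$ and $\alpha$ are defined, and is the argument you should substitute for your final paragraph.
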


\begin{proof}
Assume that $\beta$ is a root of unity and that $(\phi,\alpha)$ is $K^\ab$-conjugate to $(x^d,\beta)$.  Then $K_\infty(x^d,\beta)$ is a cyclotomic, and hence abelian, extension of $K$, and it follows from Proposition~\ref{AbelConjProp} {\bf (b)} that $K_\infty(\phi,\alpha)/K$ is an abelian extension.

Conversely, assume that $K_\infty(\phi,\alpha)/K$ is an abelian extension.  Using Proposition~\ref{AbelConjProp} {\bf (a)}, it follows that $K_\infty(x^d,\beta)$ is contained in a finite extension $L$ of $K^\ab$.  If $\beta$ is not a root of unity, then $h(\beta)>0$. (Note that $\beta\neq0$ by the assumption that $\alpha$ is not an exceptional point of $\phi$, and hence $\beta$ is not an exceptional point of $x^d$.)  But $\beta^{1/d^n}\in K_\infty(x^d,\beta)\subseteq L$ for all $n\geq0$, and $h(\beta^{1/d^n})=\frac{1}{d^n}h(\beta)\to0^+$ as $n\to+\infty$, a contradiction of Theorem \ref{AmorosoZannierBound}.  We conclude that $\beta$ must be a root of unity.

Finally, we must show that the $\Kbar$-conjugacy between $(\phi,\alpha)$ and $(x^d,\beta)$ is actually defined over $K^\ab$.  By hypothesis there exists $\gamma(x)=ax+b$ defined over $\Kbar$  for which $x^d=\gamma\circ\phi\circ\gamma^{-1}(x)$ and $\beta=\gamma(\alpha)$.  Moreover, $\gamma$ restricts to a bijection from the backward $\phi$-orbit of $\alpha$ onto the backward $x^d$-orbit of $\beta$.  These are infinite sets contained in $K^\ab$, since both $K_\infty(\phi,\alpha)/K$ and $K_\infty(x^d,\beta)/K$ are abelian extensions.  Selecting distinct corresponding pairs $\gamma(s_1)=t_1$ and $\gamma(s_2)=t_2$ with $s_j,t_j\in K^\ab$, we have that both $a=(t_1-t_2)/(s_1-s_2)$ and $b=(s_1t_2-t_1s_2)/(s_1-s_2)$ are in $K^\ab$.
\end{proof}

Let $d\geq2$ be an integer, and let $T_d(x)\in\ZZ[x]$ be the $d$-th Chebyshev polynomial; that is, $T_d(x)$ is the unique polynomial of degree $d$ satisfying $T_d(x+\frac{1}{x})=x^d+\frac{1}{x^d}$.  In other words, considering the $2$-to-$1$ rational map $\pi:\GG_m\to\AA^1$ defined by $\pi(x)=x+\frac{1}{x}$, we have a commutative diagram
\begin{equation}\label{ChebyshevCD}
\begin{CD}
\GG_m  @> x^d >>   \GG_m \\ 
@V \pi VV                                    @VV \pi V \\ 
\AA^1           @> T_d >>      \AA^1.
\end{CD} 
\end{equation}
See Silverman \cite{MR2316407} $\S$6.2.

\begin{thm}\label{ChebyshevTheorem}
Let $\phi(x)\in \Kbar[x]$ be a polynomial of degree $d\geq2$, let $\alpha\in \Kbar$ be a non-exceptional point for $\phi$, and assume that the pair $(\phi,\alpha)$ is $\Kbar$-conjugate to the pair $(T_d,\beta)$ for $\beta\in \Kbar$.  Then $K_\infty(\phi,\alpha)/K$ is an abelian extension if and only if $\beta=\zeta+\frac{1}{\zeta}$ for some root of unity $\zeta\in\Kbar$ and $(\phi,\alpha)$ is $K^\ab$-conjugate to $(T_d,\beta)$.
\end{thm}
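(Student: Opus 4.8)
The plan is to mirror the proof of Theorem~\ref{PoweringTheorem}, using the Chebyshev semiconjugacy $\pi(x)=x+x^{-1}$ from diagram (\ref{ChebyshevCD}) in place of the identity map, and replacing the height estimate on $\beta^{1/d^n}$ by an estimate on a suitable $\pi$-preimage. For the ``if'' direction, suppose $\beta=\zeta+\zeta^{-1}$ with $\zeta$ a root of unity and $(\phi,\alpha)$ is $K^\ab$-conjugate to $(T_d,\beta)$. First I would show that every element $\delta$ of the backward $T_d$-orbit of $\beta$ lies in $K^\ab$: writing $\delta=\eta+\eta^{-1}$ for a root $\eta\in\Kbar^\times$ of $x^2-\delta x+1$, the iterated form of (\ref{ChebyshevCD}) gives $\eta^{d^n}+\eta^{-d^n}=T_d^n(\delta)=\zeta+\zeta^{-1}$ whenever $T_d^n(\delta)=\beta$, which forces $\eta^{d^n}\in\{\zeta,\zeta^{-1}\}$; hence $\eta$ is a root of unity and $\delta=\eta+\eta^{-1}\in\QQ^\ab\subseteq K^\ab$. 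Consequently $K_\infty(T_d,\beta)\subseteq K^\ab$, so $K_\infty(T_d,\beta)/K$ is abelian (being a subextension of an abelian extension), and since $T_d$, $\beta$, $\phi$, $\alpha$ are all defined over $K^\ab$, Proposition~\ref{AbelConjProp}(b) lets me conclude that $K_\infty(\phi,\alpha)/K$ is abelian as well.

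For the ``only if'' direction, assume $K_\infty(\phi,\alpha)/K$ is abelian, hence $K_\infty(\phi,\alpha)\subseteq K^\ab$. By Proposition~\ref{AbelConjProp}(a), $K_\infty(T_d,\beta)$ lies in a finite extension $L$ of $K^\ab$; put $D=[L:K^\ab]$. Write $\beta=w+w^{-1}$ with $w\in\Kbar^\times$ a root of $x^2-\beta x+1$ (so $w\neq0$). For each $n\geq1$ fix a $d^n$-th root $\eta_n$ of $w$ and set $\delta_n=\eta_n+\eta_n^{-1}$; then by (\ref{ChebyshevCD}), $T_d^n(\delta_n)=\eta_n^{d^n}+\eta_n^{-d^n}=w+w^{-1}=\beta$, so $\delta_n$ lies in the backward $T_d$-orbit of $\beta$ and therefore $\delta_n\in K_\infty(T_d,\beta)\subseteq L$. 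The key point is that $\eta_n$ is a root of the monic quadratic $x^2-\delta_n x+1\in L[x]$, so it generates an extension of $K^\ab$ of degree at most $2D$, a bound uniform in $n$. If $w$ were not a root of unity, then each $\eta_n$ is a nonzero non-root of unity, so Theorem~\ref{AmorosoZannierBound} yields $h(\eta_n)\geq c$ for a constant $c>0$ depending only on $K$ and $D$; but $h(w)=h(\eta_n^{d^n})=d^n h(\eta_n)$ forces $h(\eta_n)=d^{-n}h(w)\to0$ as $n\to\infty$, a contradiction. Hence $w$ is a root of unity $\zeta$, and $\beta=\zeta+\zeta^{-1}$.

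It then remains to check that the $\Kbar$-conjugacy between $(\phi,\alpha)$ and $(T_d,\beta)$ is already defined over $K^\ab$; here I would argue exactly as at the end of the proof of Theorem~\ref{PoweringTheorem}. By the computation in the ``if'' direction, $K_\infty(T_d,\beta)\subseteq K^\ab$, and by hypothesis $K_\infty(\phi,\alpha)\subseteq K^\ab$, so the affine conjugating map $\gamma(x)=ax+b$ restricts to a bijection between the backward $\phi$-orbit of $\alpha$ and the backward $T_d$-orbit of $\beta$, both infinite subsets of $K^\ab$ (the former infinite because $\alpha$ is non-exceptional, and $\gamma$ bijects the two orbits level by level). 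Choosing two distinct corresponding pairs $\gamma(s_i)=t_i$ with $s_i,t_i\in K^\ab$ and solving the resulting linear system shows $a,b\in K^\ab$, so $(\phi,\alpha)$ is $K^\ab$-conjugate to $(T_d,\beta)$.

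I expect the ``only if'' direction to be where the real work lies, and within it the one genuinely new point compared to the powering case: one cannot apply Amoroso--Zannier to $\delta_n$ directly, since $h(\delta_n)$ need not tend to $0$. The decay of height is visible only after passing to the $\pi$-preimage $\eta_n$, and what makes that usable is the observation that $\eta_n$ stays in an extension of $K^\ab$ of bounded degree --- at most $2$ over the fixed finite extension $L$ --- because $\delta_n\in L$ satisfies $\eta_n^2-\delta_n\eta_n+1=0$. Everything else should be routine bookkeeping.
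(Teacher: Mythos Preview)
Your proposal is correct and follows essentially the same approach as the paper's proof: both directions use the semiconjugacy $\pi(x)=x+x^{-1}$ to pass between $T_d$-preimages $\delta_n$ (the paper's $\epsilon_n$) and $d^n$-th roots $\eta_n$ (the paper's $\xi_n$) of a fixed $\pi$-preimage of $\beta$, and the contradiction in the ``only if'' direction is obtained exactly as you describe, by applying Amoroso--Zannier to $\eta_n$ in the degree-$\le 2D$ extension of $K^\ab$ determined by $\eta_n^2-\delta_n\eta_n+1=0$. Your write-up is slightly more explicit in spots (e.g.\ the argument that $\eta^{d^n}\in\{\zeta,\zeta^{-1}\}$), but there is no substantive difference.
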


\begin{proof}
Assume that $\beta=\zeta+\frac{1}{\zeta}$ for some root of unity $\zeta\in\Kbar$ and that $(\phi,\alpha)$ is $K^\ab$-conjugate to $(T_d,\beta)$.  By the commutative diagram (\ref{ChebyshevCD}), the points $\epsilon\in \Kbar$ satisfying $T_d^n(\epsilon)=\beta$ are precisely the points of the form $\epsilon=\xi+\frac{1}{\xi}$, as $\xi$ ranges over the $d^n$-th roots of $\zeta$.  In particular, $K_\infty(T_d,\beta)$ is contained in a cyclotomic, and hence abelian, extension of $K$, and it follows from Proposition~\ref{AbelConjProp} {\bf (b)} that $K_\infty(\phi,\alpha)/K$ is an abelian extension.

Conversely, assume that $K_\infty(\phi,\alpha)/K$ is an abelian extension.  Using Proposition~\ref{AbelConjProp} {\bf (a)}, it follows that $K_\infty(T_d,\beta)$ is contained in a finite extension $L$ of $K^\ab$; let $D=[L:K^\ab]$.  Select $\zeta\in\pi^{-1}(\beta)$, thus $\beta=\zeta+\frac{1}{\zeta}$, and assume that $\zeta$ is not a root of unity.  In particular $h(\zeta)>0$.  Let $n\geq0$ and select $\epsilon_n\in \Kbar$ satisfying $T_d^n(\epsilon_n)=\beta$; thus $\epsilon_n=\xi_n+\frac{1}{\xi_n}$ for some $d^n$-th root $\xi_n=\zeta^{1/d^n}$ of $\zeta$.  Since $\epsilon_n\in K_\infty(T_d,\beta)\subseteq L$, it follows that $\xi_n$ is contained in a quadratic extension of $L$ and hence contained in an extension of $K^\ab$ of degree $\leq 2D$.  It follows from Theorem \ref{AmorosoZannierBound} that $h(\xi_n)\geq C_{K,2D}$.  But as $n\geq0$ is arbitrary, we may let $n\to+\infty$ and obtain $h(\xi_n)=\frac{1}{d^n}h(\zeta)\to0^+$, a contradiction.  We conclude that $\zeta$ must be a root of unity.  That the $\Kbar$-conjugacy between $(\phi,\alpha)$ and $(T_d,\beta)$ is actually defined over $K^\ab$ follows from the same argument used in Theorem~\ref{PoweringTheorem}. 
\end{proof}

\section{Maps with small Arakelov-Zhang pairing}\label{AZPairingSect}

We now describe how to extend the ideas used in the proof of Theorem~\ref{PoweringTheorem} to treat polynomials which are not necessarily $\Kbar$-conjugate to powering maps, but which are $K^\ab$-conjugate to some polynomial $\phi(x)\in K[x]$ that is arithmetically close to a powering map.

We first recall the definitions of several arithmetic-dynamical objects associated to a polynomial $\phi(x)\in K[x]$ of degree $d\geq2$ defined over a number field $K$; see \cite{MR2316407} $\S$3.4-3.5 for further details.  The {\em Call-Silverman canonical height function} $\hhat_{\phi}:\Kbar\to\RR$ may be defined by the limit
$$
\hhat_{\phi}(x) = \lim_{n\to+\infty}\frac{h(\phi^n(x))}{d^n}
$$
and can be characterized by the the identity $\hhat_\phi(\phi(x))=d\hhat_\phi(x)$ together with the fact that $h-\hhat_\phi$ is bounded on $\Kbar$.  Locally, given a finite extension $L/K$, for each place $v\in M_L$ define the {\em canonical local height function} by
\begin{equation}\label{LocalHeightDef}
\lambda_{\phi,v}:\CC_v\to\RR \hskip1cm \lambda_{\phi,v}(x) = \lim_{n\to+\infty}\frac{1}{d^n}\log^+|\phi^n(x)|_v.
\end{equation}
Then an alternative expression for the canonical height is given by
\begin{equation}\label{CanonicalHeightDef}
\hhat_\phi(\alpha) = \sum_{v\in M_L}r_v\lambda_{\phi,v}(\alpha),
\end{equation}
for all $\alpha\in L$, a formula which may be viewed as analogous to (\ref{WeilHeightDef}).

For each place $v\in M_K$, standard arguments show that $\lambda_{\phi,v}(x)\geq0$ for all $x\in \CC_v$, with equality if and only if $x$ is in the filled Julia set 
\begin{equation*}\label{FilledJuliaDef}
F_{\phi,v} = \{x\in\CC_v \mid |\phi^n(x)|_v \text{ is bounded as } n\to+\infty\}
\end{equation*}
associated to $\phi$.  The  {\em canonical measure} $\mu_{\phi,v}$ associated to $\phi$ is a $\phi$-invariant unit Borel measure supported on $F_{\phi,v}$ which describes the limiting distribution of preperiodic points and iterated inverse images with respect to $\phi$.  There are several equivalent constructions of this measure in the literature; see \cite{MR736568}, \cite{MR741393} in the Archimedean case and \cite{MR2244226}, \cite{MR2244803}, \cite{MR2221116} in the non-Archimedean case.  (Technically, when $v$ is a non-Archimedean place, the objects $\lambda_{\phi,v}$, $F_{\phi,v}$, and $\mu_{\phi,v}$ need to be interpreted on the Berkovich affine line $\Abf_v^1$.  We will not need to go into these details in the present paper.)

Given two polynomials $\phi(x),\psi(x)\in K[x]$ of degree at least two, the {\em Arakelov-Zhang pairing} can be defined by either of the two expressions
\begin{equation}\label{AZPairing}
\langle\phi,\psi\rangle = \sum_{v\in M_K}r_v\int\lambda_{\phi,v}d\mu_{\psi,v} =\sum_{v\in M_K}r_v\int\lambda_{\psi,v}d\mu_{\phi,v}.
\end{equation}
Thus $\langle\phi,\psi\rangle$ is a nonnegative real number, and in some sense it  measures the global arithmetic-dynamical distance between the two maps.  This pairing was originally defined as a limit of arithmetic intersection products by Zhang \cite{MR1311351}, and described analytically using Berkovich spaces by Petsche-Szpiro-Tucker \cite{MR2869188}.  For our purposes the most important fact about the Arakelov-Zhang pairing is that it is closely related to points which have small canonical height with respect to one of the two maps.  In particular, it was shown in \cite{MR2869188} that if $\{\alpha_n\}$ is a sequence of distinct points in $\Kbar$ with $\hhat_{\phi}(\alpha_n)\to0$, then $\hhat_{\psi}(\alpha_n)\to \langle\phi,\psi\rangle$. 

In the special case $\psi(x)=x^d$ for $d\geq2$, the canonical height $\hhat_{\psi}$ is the same as the usual Weil height $h$, $\lambda_{\psi,v}(\cdot)=\log^+|\cdot|_v$, $F_{\psi,v}$ is the closed unit disc, and $\mu_{\psi,v}$ is equal to the normalized Haar measure supported on the unit circle of $\CC_v=\CC$ when $v$ is Archimedean, and equal to the Dirac measure supported at the Gauss point of $\Abf_v^1$ when $v$ is non-Archimedean.  In particular, the value of the pairing $\langle\phi,x^d\rangle$ does not depend on $d$.

\begin{thm}\label{SmallAZPairing}
Let $\phi(x)\in K[x]$ be a polynomial of degree $d\geq2$ defined over $K$ such that $\langle\phi,x^d\rangle>0$, and let $\alpha$ be a non-exceptional point for $\phi$.  If $K_\infty(\phi,\alpha)\subseteq L\subseteq \Kbar$, then
\begin{equation}\label{BogAZIneq}
B_0(L)\leq \langle\phi,x^d\rangle.
\end{equation}
\end{thm}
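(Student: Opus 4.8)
The plan is to construct inside $L$ an infinite sequence of distinct algebraic numbers whose Weil heights are positive and converge to $\langle\phi,x^d\rangle$, and then to read off the inequality $B_0(L)\le\langle\phi,x^d\rangle$ directly from the defining property of the Bogomolov constant.

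Since $\alpha$ is non-exceptional, the backward orbit $O^-(\alpha)=\bigcup_{n\ge0}\phi^{-n}(\alpha)$ is an infinite set, and by construction $O^-(\alpha)\subseteq K_\infty(\phi,\alpha)\subseteq L$. As $\phi$ has degree $d\ge2$ it is surjective on $\Kbar$, so every $\phi^{-n}(\alpha)$ is nonempty, and for $\beta\in\phi^{-n}(\alpha)$ iterating the functional equation $\hhat_\phi(\phi(x))=d\,\hhat_\phi(x)$ gives $\hhat_\phi(\alpha)=\hhat_\phi(\phi^n(\beta))=d^n\hhat_\phi(\beta)$, i.e.\ $\hhat_\phi(\beta)=d^{-n}\hhat_\phi(\alpha)$. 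I would then produce a sequence of distinct points $\beta_1,\beta_2,\dots\in O^-(\alpha)$ with $\hhat_\phi(\beta_j)\to0$. If $\hhat_\phi(\alpha)>0$, choose $\beta_j\in\phi^{-j}(\alpha)$ arbitrarily: then $\hhat_\phi(\beta_j)=d^{-j}\hhat_\phi(\alpha)\to0$, and the $\beta_j$ are automatically distinct, since $\beta_i=\beta_j$ with $i<j$ would force $d^{-i}\hhat_\phi(\alpha)=d^{-j}\hhat_\phi(\alpha)$ and hence $\hhat_\phi(\alpha)=0$. If instead $\hhat_\phi(\alpha)=0$, then every element of $O^-(\alpha)$ has vanishing canonical height, and since $O^-(\alpha)$ is infinite I simply enumerate distinct elements $\beta_1,\beta_2,\dots$ of it.

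Next I would invoke the property of the Arakelov-Zhang pairing recalled above from \cite{MR2869188}: for a sequence of distinct points $\{\beta_j\}$ of $\Kbar$ with $\hhat_\phi(\beta_j)\to0$ one has $\hhat_\psi(\beta_j)\to\langle\phi,\psi\rangle$; taking $\psi(x)=x^d$, for which $\hhat_\psi=h$, this yields $h(\beta_j)\to\langle\phi,x^d\rangle$. Since $\langle\phi,x^d\rangle>0$ by hypothesis, $h(\beta_j)>0$ for all sufficiently large $j$, so for every real $B>\langle\phi,x^d\rangle$ the set $\{\gamma\in L\mid 0<h(\gamma)\le B\}$ contains all but finitely many of the distinct $\beta_j$ and is therefore infinite; by the characterization of $B_0(L)$ this forces $B\ge B_0(L)$, and letting $B$ decrease to $\langle\phi,x^d\rangle$ gives (\ref{BogAZIneq}). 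The only step needing genuine care is the construction of the $\beta_j$ — organizing the case split so that the chosen preimages are distinct with canonical heights tending to $0$ — together with verifying that the cited convergence statement applies verbatim, including the degenerate case where all $\hhat_\phi(\beta_j)$ already vanish; the hypothesis $\langle\phi,x^d\rangle>0$ enters only to keep the heights $h(\beta_j)$ positive so that they bear on $B_0(L)$.
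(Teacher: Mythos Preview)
Your proof is correct and follows essentially the same approach as the paper: choose distinct preimages $\alpha_n\in\phi^{-n}(\alpha)\subseteq L$, observe that $\hhat_\phi(\alpha_n)=d^{-n}\hhat_\phi(\alpha)\to0$, apply the Petsche--Szpiro--Tucker convergence $h(\alpha_n)\to\langle\phi,x^d\rangle$, and conclude from the definition of $B_0(L)$. The paper handles distinctness in a single phrase (``since $\alpha$ is not an exceptional point we may assume that the $\alpha_n$ are distinct''), whereas you spell out the case split $\hhat_\phi(\alpha)>0$ versus $\hhat_\phi(\alpha)=0$; this extra care is fine but does not constitute a different argument.
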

\begin{proof}
For each $n\geq1$, let $\alpha_n\in K_\infty\subseteq L$ satisfy $\phi^n(\alpha_n)=\alpha$; since $\alpha$ is not an exceptional point we may assume that the $\alpha_n$ are distinct.  It follows from properties of the canonical height that $\hhat_{\phi}(\alpha_n)=\hhat_{\phi}(\alpha)/d^n\to0$ as $n\to+\infty$.  By Theorem 1 of Petsche-Szpiro-Tucker \cite{MR2869188}, it follows that $h(\alpha_n)\to\langle\phi,x^d\rangle>0$, and (\ref{BogAZIneq}) follows from the definition of $B_0(L)$.
\end{proof}

As a sample application of Theorem~\ref{SmallAZPairing}, we can show that for any number field $K$, a certain infinite family of polynomials satisfies Conjecture~\ref{PolynomialConjecture}.

\begin{cor}\label{SmallAZPairingCor}
For each number field $K$, there exists a constant $C_K$ such that $\Gal(K_\infty(\frac{x^p-x}{p},\alpha)/K)$ is nonabelian over $K$ for all $\alpha\in K$ and all primes $p\geq C_K$.  In particular, $\Gal(K_\infty(\frac{x^p-x}{p},\alpha)/\QQ)$ is nonabelian for all $\alpha\in\QQ$ and all $p\geq29$.
\end{cor}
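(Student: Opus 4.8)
The plan is to combine Theorem~\ref{SmallAZPairing} with the Bogomolov lower bounds of Amoroso--Zannier (Theorem~\ref{AmorosoZannierBound}) and Amoroso--Dvornicich. Set $\phi_p(x)=(x^p-x)/p\in\QQ[x]\subseteq K[x]$, a polynomial of degree $p\ge2$. First I would note that every $\alpha\in K$ is non-exceptional for $\phi_p$: a polynomial of degree $p$ has an exceptional point in $\AA^1$ only if it is $\Kbar$-conjugate to $x^p$, i.e.\ has the form $s_0+c(x-s_0)^p$, and comparing coefficients of $x^{p-1}$ then forces $s_0=0$, hence $\phi_p(x)=\tfrac1p x^p$, contradicting the nonzero linear term. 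Consequently, if $\Gal(K_\infty(\phi_p,\alpha)/K)$ were abelian then $K_\infty(\phi_p,\alpha)\subseteq K^\ab$, and Theorem~\ref{SmallAZPairing} applied with $L=K^\ab$ would give $B_0(K^\ab)\le\langle\phi_p,x^p\rangle$, provided $\langle\phi_p,x^p\rangle>0$.

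The key step is the exact evaluation $\langle\phi_p,x^p\rangle=\dfrac{\log p}{p-1}$, which I would carry out place by place from $\langle\phi_p,x^p\rangle=\sum_{v\in M_K}r_v\int\lambda_{\phi_p,v}\,d\mu_{x^p,v}$, using the description of $\mu_{x^p,v}$ recalled in $\S$~\ref{AZPairingSect}: normalized Haar measure on the unit circle when $v\mid\infty$, and the Dirac mass at the Gauss point $\zeta_{\Gauss}$ for non-Archimedean $v$. For $v\mid\infty$ and $|z|_v=1$ one has $|\phi_p(z)|_v\le 2/p\le 1$, so $\phi_p$ maps the closed unit disc into itself; hence the unit circle lies in the filled Julia set, $\lambda_{\phi_p,v}$ vanishes there, and the Archimedean terms contribute nothing. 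For a finite place $v\nmid p$ the coefficients $\pm1/p$ are $v$-adic units, so $\phi_p$ has good reduction of degree $p$ and $\lambda_{\phi_p,v}(\zeta_{\Gauss})=0$. For $v\mid p$, normalizing $|p|_v=1/p$, the Gauss norms satisfy $\|\phi_p^{\,n+1}\|_v=p\,\|\phi_p^{\,n}\|_v^{\,p}$ (for $n\ge1$ by the strict ultrametric inequality, since then $\|\phi_p^{\,n}\|_v>1$; the case $n=0$ is immediate), so $\log_p\|\phi_p^{\,n}\|_v=(p^n-1)/(p-1)$ and therefore $\lambda_{\phi_p,v}(\zeta_{\Gauss})=\lim_n p^{-n}\log\|\phi_p^{\,n}\|_v=\tfrac{\log p}{p-1}$. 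Since $\sum_{v\mid p}r_v=1$, this gives $\langle\phi_p,x^p\rangle=\tfrac{\log p}{p-1}>0$, a value independent of $K$ (as it must be, by base-change invariance of the pairing).

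To conclude, I would combine these inputs with the bound $B_0(K^\ab)\ge C_{K,1}>0$ of Theorem~\ref{AmorosoZannierBound}: the resulting inequality $C_{K,1}\le\tfrac{\log p}{p-1}$ fails once $p$ is large, since $t\mapsto\tfrac{\log t}{t-1}$ is decreasing for $t\ge3$ and tends to $0$; taking $C_K$ to be the least prime past this threshold proves the general assertion. For $K=\QQ$ one uses instead the Amoroso--Dvornicich bound $B_0(\QQ^\ab)\ge(\log5)/12$ (see $\S$~\ref{HeightsSect}), and the numerical inequality $(\log29)/28<(\log5)/12$, together with the monotonicity just noted, shows $(\log p)/(p-1)<(\log5)/12$ for every prime $p\ge29$, which is the explicit claim.

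I expect the only genuinely non-routine point to be the Gauss-point escape-rate computation at the primes above $p$: one must correctly locate the bad reduction of $\phi_p$ at $p$ and verify the recursion $\|\phi_p^{\,n+1}\|_v=p\,\|\phi_p^{\,n}\|_v^{\,p}$, in particular that no cancellation occurs, which holds because $\|\phi_p^{\,n}\|_v^{\,p}>\|\phi_p^{\,n}\|_v$ strictly once $\|\phi_p^{\,n}\|_v>1$. The remaining steps are bookkeeping with results already established.
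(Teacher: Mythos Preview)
Your proof is correct and follows the same strategy as the paper: compute $\langle\phi_p,x^p\rangle=\frac{\log p}{p-1}$, apply Theorem~\ref{SmallAZPairing}, and compare with the Amoroso--Zannier and Amoroso--Dvornicich lower bounds for $B_0(K^\ab)$ and $B_0(\QQ^\ab)$. The only difference is that the paper simply cites Petsche--Stacy \cite{MR3958063} for the value of the pairing, whereas you carry out the place-by-place computation directly (and also explicitly verify non-exceptionality, which the paper leaves implicit); your Gauss-norm recursion at $v\mid p$ and the vanishing at the Archimedean and $v\nmid p$ places are all correct.
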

\begin{proof}
It has been shown by Petsche-Stacy \cite{MR3958063} that $\langle\frac{x^p-x}{p},x^d\rangle=\frac{\log p}{p-1}$.  Thus if $K_\infty(\frac{x^p-x}{p},\alpha)\subseteq K^\ab$, Theorem~\ref{SmallAZPairing} implies that $B_0(K^\ab)\leq \frac{\log p}{p-1}$.  But since $B_0(K^\ab)>0$ (Amoroso-Zannier \cite{MR1817715}), we have a contradiction for large enough $p$.  In particular, it was shown by Amoroso-Dvornicich \cite{MR1740514} that $B_0(\QQ^\ab)\geq\frac{\log 5}{12}$, which exceeds $\frac{\log p}{p-1}$ once $p\geq29$.
\end{proof}

We remark that, according to Conjecture~\ref{PolynomialConjecture}, we expect that $\Gal(K_\infty(\frac{x^p-x}{p},\alpha)/K)$ is nonabelian for all number fields $K$, all $\alpha\in K$, and all primes $p$.

\section{The map $x^2-1$}\label{x2MinusOneSect}

It is well known that there are exactly three $\QQ$-conjugacy classes of postcritically finite quadratic polynomials over $\QQ$, represented by $x^2$, $x^2-1$, and $x^2-2$.  By $\QQ$-conjugacy it suffices to check the family $\phi_c(x)=x^2+c$  for $c\in \QQ$, and the assumption that the critical point $0$ is preperiodic (i.e. $\phi_c^m(0)=\phi_c^n(0)$ for $m<n$) forces $c$ to be an algebraic integer (hence a rational integer) and also an element of the complex  Mandelbrot set $\Mcal=\{c\in\CC\mid\phi_c^n(0)\not\to\infty\}$.  It is elementary to check that $\Mcal\cap\ZZ=\{-2,-1,0\}$.

Since $x^2$ and $x^2-2$ are a powering map and a Chebyshev map, respectively, they are treated by Theorems~\ref{PoweringTheorem} and \ref{ChebyshevTheorem}, and the stable postcritically infinite quadratic case is treated in Theorem~\ref{StableNPCFCaseThm}.  Thus in order to complete the proof of Theorem~\ref{MainStabQuadThm}, it suffices to consider the polynomial $\phi(x)=x^2-1$ over $\QQ$ in the stable case.  

In order to show that $K_\infty(x^2-1,\alpha)/\QQ$ is never an abelian extension, one might hope to combine the bound $B_0(\QQ^\ab)\geq(\log5)/12=0.134...$ of Amoroso-Dvornicich with Theorem~\ref{SmallAZPairing}, but it turns out that the Arakelov-Zhang pairing $\langle x^2-1,x^2\rangle=0.167...$ is too large for this argument to apply directly.  However, we can recover this strategy (in the stable case) by showing that if $K_\infty(x^2-1,\alpha)$ is an abelian extension of $\QQ$ then it is contained in the subfield $\QQ(\mu_{2^\infty})$ of $\QQ^\ab$, which has Bogomolov constant $B_0( \QQ(\mu_{2^\infty}))\geq(\log2)/4=0.173...$, large enough to obtain a contradiction.

\begin{lem}\label{NoOddRamify}
Let $\phi(x)=x^2-1$, let $\alpha\in K$, and assume that the pair $(\phi,\alpha)$ is stable over $K$ and that $K_\infty=K_\infty(x^2-1,\alpha)$ is an abelian extension of $K$.  If $\pfrak$ is a prime of $K$ with residue characteristic not equal to $2$, then $\pfrak$ is unramified in $K_\infty$.
\end{lem}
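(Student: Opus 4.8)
The plan is to imitate the proof of Theorem~\ref{StableNPCFCaseThm}. Stability together with the hypothesis that $\Gal(K_\infty/K)$ is abelian gives $[K_n:K]=2^n$ for all $n$ via Lemma~\ref{ActionLem}, so $K=K_0\subset K_1\subset\dots$ is a tower of distinct extensions. If we can show each $\Gal(K_n/K)$ is cyclic, then this is a tower of distinct cyclic $2$-extensions, and the $p=2$ case of Lemma~\ref{CyclicTowerRamLem} yields the conclusion. By the stability hypothesis $\Gal(K_n/K)$ is abelian and transitive on the $2^n$ roots of $\phi^n(x)-\alpha$, so by Lemma~\ref{CyclicSnLem} it suffices to show, for arbitrarily large $n$, that $\disc(\phi^n(x)-\alpha)$ is not a square in $K$ (cyclicity then descends to every quotient $\Gal(K_m/K)$).

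To analyze the discriminant, set $\psi(x)=\phi(x+\alpha)-\alpha$, which is monic with critical point $c=-\alpha$; exactly as in Theorem~\ref{StableNPCFCaseThm}, Lemma~\ref{DiscIterateLem} shows $\disc(\phi^n(x)-\alpha)=\disc(\psi^n(x))$ equals a square in $K$ times $\psi^n(-\alpha)=\phi^n(0)-\alpha$ for all $n\geq2$. Since the critical orbit of $\phi(x)=x^2-1$ is the $2$-cycle $0\mapsto-1\mapsto0$, this value is $-\alpha$ when $n$ is even and $-1-\alpha$ when $n$ is odd. Thus if \emph{at least one} of $-\alpha$, $-1-\alpha$ is a non-square in $K$, then $\disc(\phi^n(x)-\alpha)$ is a non-square for all large $n$ of one fixed parity, and the argument above finishes the proof in this case.

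It remains to treat the case where \emph{both} $-\alpha$ and $-1-\alpha$ are squares in $K$, say $-\alpha=a^2$ and $-1-\alpha=b^2$ with $a,b\in K^\times$ (note $\alpha\notin\{0,-1\}$ by stability). Then $1+\alpha=-b^2$, so $K_1=K(\sqrt{1+\alpha})=K(i)$ where $i:=\sqrt{-1}$, and $\phi^{-1}(\alpha)=\{bi,-bi\}$; since $[K_1:K]=2$ we have $i\notin K$. Put $L=K(i)=K_1$; the plan is to rerun the previous argument over $L$ for the pair $(\phi,bi)$. Using the factorization $\phi^n(x)-\alpha=(\phi^{n-1}(x)-bi)(\phi^{n-1}(x)+bi)$ over $L$, a root $\beta$ of $\phi^n(x)-\alpha$ with $\phi^{n-1}(\beta)=bi$ satisfies $[L(\beta):L]=2^{n-1}$, which equals the degree of $\phi^{n-1}(x)-bi$; hence $\phi^{n-1}(x)-bi$ is irreducible over $L$, so $(\phi,bi)$ is stable over $L$, and applying Lemma~\ref{ActionLem} to $\Gal(K_n/L)$ shows the action on its roots is regular, so $K_n(\phi,\alpha)=L_{n-1}(\phi,bi)$ and therefore $K_\infty=L_\infty(\phi,bi)$. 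Crucially, the obstruction of the previous paragraph cannot recur over $L$: since $i\in L$, if both $-bi$ and $-1-bi$ were squares in $L$, then $1+bi=-(-1-bi)$ would be a square in $L$, contradicting irreducibility of $\phi(x)-bi=x^2-(1+bi)$ over $L$. So the previous paragraph applies to $(\phi,bi)$ over $L$, exhibiting $L=L_0\subset L_1\subset\dots$ as a tower of distinct cyclic $2$-extensions, and by Lemma~\ref{CyclicTowerRamLem} every prime of $L$ with odd residue characteristic is unramified in $K_\infty$. Finally, any prime $\pfrak$ of $K$ with odd residue characteristic is unramified in $L=K(\sqrt{-1})$ (the relative discriminant divides $(4)$), so the primes of $L$ above $\pfrak$ have odd residue characteristic and are unramified in $K_\infty/L$; hence $\pfrak$ is unramified in $K_\infty$.

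I expect the main obstacle to be this last case. The points requiring care are: recognizing that "both squares" is exactly the configuration forcing $K_1=K(i)$; setting up the descent of the stable-pair structure from $(\phi,\alpha)$ over $K$ to $(\phi,bi)$ over $K(i)$, where the degree count on the factor $\phi^{n-1}(x)-bi$ does the work; and observing that over $K(i)$ the obstruction is self-defeating, since $\sqrt{-1}\in K(i)$ converts "$-1-bi$ a square" into a violation of stability. Everything else is a direct transcription of the method of Theorem~\ref{StableNPCFCaseThm}.
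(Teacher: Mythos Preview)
Your proof is correct and follows essentially the same approach as the paper's: compute $\disc(\phi^n(x)-\alpha)$ via Lemma~\ref{DiscIterateLem}, use Lemma~\ref{CyclicSnLem} to get cyclicity when $-1-\alpha$ (or $-\alpha$) is a non-square, invoke Lemma~\ref{CyclicTowerRamLem}, and in the remaining case pass to $K_1=K(\sqrt{-1})$ and rerun the argument for a point of $\phi^{-1}(\alpha)$. The only quibble is that your appeal to Lemma~\ref{ActionLem} for the regularity of the $\Gal(K_n/L)$-action is really just orbit--stabilizer (you already know $|\Gal(K_n/L)|=2^{n-1}=|\phi^{-(n-1)}(bi)|$ and transitivity), but the conclusion $K_n=L_{n-1}(\phi,bi)$ is correct and in fact makes explicit a step the paper leaves implicit.
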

\begin{proof}
The stability and abelian hypotheses imply via Lemma~\ref{ActionLem} that $[K_n:K]=2^n$ for all $n\geq1$.  Let $\psi(x)=\phi(x+\alpha)-\alpha=x^2+2\alpha x+\alpha^2-\alpha-1$.  The critical point of $\psi(x)$ is $c=-\alpha$, which is part of a $2$-cycle; that is, $\psi^n(c)=-\alpha$ for all even $n$, and $\psi^n(c)=-1-\alpha$ for all odd $n$.  Clearly
\begin{equation*}
\disc(\phi(x)-\alpha) = 4(1+\alpha)
\end{equation*}
and using Lemma~\ref{DiscIterateLem} for $n\geq2$ we have
\begin{equation*}
\begin{split}
\disc(\phi^n(x)-\alpha) & =\disc(\phi^n(x+\alpha)-\alpha) \\
& =\disc(\psi^n(x)) \\
& = 
\begin{cases}
R_n^2(-\alpha) & \text{ if } n\geq2 \text{ is even} \\
R_n^2(-1-\alpha) & \text{ if } n\geq3 \text{ is odd}
\end{cases}
\end{split}
\end{equation*}
for some nonzero $R_n\in K$.

\underline{Case 1:} $-1-\alpha$ is not a square in $K$.  Then $\disc(\phi^n(x)-\alpha)$ is not a square for all odd $n\geq3$, and thus viewing $\Gal(K_n/K)$ as a subgroup of $S_{2^n}$ via its action on the roots of $\phi^n(x)-\alpha$, $\Gal(K_n/K)$ is not a subgroup of $A_{2^n}$ for all odd $n\geq3$.  By Lemma~\ref{CyclicSnLem}, it follows that $\Gal(K_n/K)$ is cyclic for all odd $n\geq3$.  Then $\Gal(K_n/K)$ must be cyclic for all $n$, since $\Gal(K_m/K)$ is a quotient of $\Gal(K_n/K)$ when $1\leq m<n$.  We conclude from Lemma~\ref{CyclicTowerRamLem} that no primes of $K$ of odd residue characteristic ramify in $K_\infty$.

\underline{Case 2:} $-1-\alpha=t^2$ for $t\in K$.  By the stability hypothesis we know that $\disc(\phi(x)-\alpha)=4(1+\alpha)=-4t^2$ is not a square in $K$, and thus we conclude that $-1$ is not a square in $K$ and $K_1=K(\phi^{-1}(\alpha))=K(\sqrt{4(1+\alpha)})=K(\sqrt{-1})$.  In particular, no primes of $K$ with odd residue characteristic ramify in $K_1$, so it now suffices to show that no primes of $K_1$ with odd residue characteristic ramify in $K_\infty$.

Let $\phi^{-1}(\alpha)=\{\alpha',\alpha''\}$.  Thus for each $n\geq1$, we have a disjoint union
$$
\phi^{-n}(\alpha)=\phi^{-(n-1)}(\alpha')\amalg\phi^{-(n-1)}(\alpha'').
$$
and it follows from the transitive action of $\Gal(K_n/K)$ on $\phi^{-n}(\alpha)$ that $\Gal(K_n/K_1)$ acts transitively on $\phi^{-(n-1)}(\alpha')$.  We conclude that $(\phi,\alpha')$ is a stable pair over $K_1$.  Arguing as above, it follows that $1+\alpha'$ is not a square in $K_1$, and as $-1$ is a square in $K_1$, we deduce that $-1-\alpha'$ is not a square in $K_1$.  We are now in the setting of Case 1 for the pair $(\phi,\alpha')$ over $K_1$, and we conclude that no primes of $K_1$ with odd residue characteristic ramify in $K_\infty$.
\end{proof}

\begin{thm}
Let $\phi(x)=x^2-1$, let $\alpha\in \QQ$, and assume that the pair $(\phi,\alpha)$ is stable.  Then $\Gal(K_\infty/\QQ)$ is nonabelian.
\end{thm}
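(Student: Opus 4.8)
The plan is to argue by contradiction. Suppose $K_\infty=K_\infty(x^2-1,\alpha)$ is abelian over $\QQ$. Since $(x^2-1,\alpha)$ is stable and $K_\infty/\QQ$ is abelian, Lemma~\ref{NoOddRamify} applies and shows that no prime of $\QQ$ with odd residue characteristic ramifies in $K_\infty$. Hence each finite subextension $K_n/\QQ$ is abelian and unramified away from $2$, so by the Kronecker--Weber theorem $K_n\subseteq\QQ(\zeta_N)$ for some $N$; since the conductor of an abelian extension of $\QQ$ is divisible only by the ramified primes, $N$ may be taken to be a power of $2$, and taking the union over $n$ gives $K_\infty\subseteq\QQ(\mu_{2^\infty})$.

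Next I would apply Theorem~\ref{SmallAZPairing} with $\phi(x)=x^2-1$, $d=2$, and $L=\QQ(\mu_{2^\infty})$. Here $\alpha$ is non-exceptional for $x^2-1$: a finite exceptional point would force $x^2-1$ to be $\Kbar$-conjugate to $x^2$, whereas the critical point $0$ of $x^2-1$ is not fixed (it lies on the $2$-cycle $0\mapsto-1\mapsto0$). Moreover $\langle x^2-1,x^2\rangle>0$, because the filled Julia sets of $x^2-1$ and $x^2$ at the archimedean place differ (the ``basilica'' versus the closed unit disc), so the canonical measures $\mu_{x^2-1,\infty}$ and $\mu_{x^2,\infty}$ differ. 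Theorem~\ref{SmallAZPairing} then gives $B_0(\QQ(\mu_{2^\infty}))\leq\langle x^2-1,x^2\rangle$, while Theorem~\ref{AD2Theorem} gives $B_0(\QQ(\mu_{2^\infty}))\geq(\log2)/4=0.1732\ldots$. Combining these, a contradiction will follow as soon as we prove
\begin{equation*}
\langle x^2-1,x^2\rangle<\frac{\log2}{4}.
\end{equation*}

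To estimate the pairing I would first reduce it to a single archimedean integral. Since $x^2-1\in\ZZ[x]$ is monic, for every prime $p$ the closed unit disc of $\CC_p$ lies in the filled Julia set $F_{x^2-1,p}$, so $\lambda_{x^2-1,p}$ vanishes at the Gauss point of $\Abf_p^1$; as $\mu_{x^2,p}$ is the Dirac measure there, every non-archimedean term of (\ref{AZPairing}) vanishes and
\begin{equation*}
\langle x^2-1,x^2\rangle=\int_{0}^{1}\lambda_{x^2-1,\infty}\!\big(e^{2\pi i\theta}\big)\,d\theta,
\end{equation*}
where $\lambda_{x^2-1,\infty}(z)=\lim_{n\to\infty}2^{-n}\log^+|\phi^n(z)|$ is the Green's function of $\CC\setminus F_{x^2-1,\infty}$ with pole at infinity. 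I would then bound this integral numerically: replace $\lambda_{x^2-1,\infty}$ by the level-$n$ approximant $2^{-n}\log^+|\phi^n(\cdot)|$ for an explicit $n$, sample at a large number $N$ of roots of unity $e^{2\pi ik/N}$, and control the total error by combining (a) the telescoping estimate $\big|\lambda_{x^2-1,\infty}(z)-2^{-n}\log^+|\phi^n(z)|\big|\leq M\cdot2^{-n}$ with an explicit constant $M$ depending only on the coefficients of $x^2-1$, and (b) a modulus-of-continuity bound for $\lambda_{x^2-1,\infty}$ on the unit circle bounding the Riemann-sum error. Carried out with SageMath (see \S\ref{ApproxSect}), this gives $\langle x^2-1,x^2\rangle=0.167\ldots<(\log2)/4$, completing the contradiction.

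The decisive --- and only genuinely delicate --- step is this final numerical estimate: one must take $n$ and $N$ large enough and bound both error terms tightly enough that the certified value of $\langle x^2-1,x^2\rangle$ is provably below $(\log2)/4$, and since $0.167$ and $0.173$ are close, genuine (if modest) precision is needed. Everything else is formal once the pairing has been reduced, via good reduction at the finite places, to an average of a Green's function over the unit circle: Lemma~\ref{NoOddRamify} plus Kronecker--Weber confines $K_\infty$ inside $\QQ(\mu_{2^\infty})$, and Theorem~\ref{SmallAZPairing} played against Theorem~\ref{AD2Theorem} converts the pairing bound into the required contradiction.
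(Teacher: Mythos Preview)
Your proposal is correct and follows essentially the same approach as the paper: assume abelian, use Lemma~\ref{NoOddRamify} plus Kronecker--Weber (the paper says ``class field theory'') to force $K_\infty\subseteq\QQ(\mu_{2^\infty})$, then play Theorem~\ref{SmallAZPairing} against Theorem~\ref{AD2Theorem} and finish with the numerical estimate $\langle x^2-1,x^2\rangle=0.167\ldots<(\log2)/4$ carried out in \S\ref{ApproxSect}. Your extra justifications (non-exceptionality of $\alpha$, positivity of the pairing, vanishing of the non-archimedean terms via good reduction) are correct and make explicit points the paper leaves implicit.
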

\begin{proof}
Assume on the contrary that $K_\infty/\QQ$ is an abelian extension.  By Lemma~\ref{NoOddRamify}, no odd primes ramify in $K_\infty$, and it follows from class field theory that $K_\infty\subseteq  \QQ(\mu_{2^\infty})$.  As we will calculate in $\S$~\ref{ApproxSect}, we have
\begin{equation}\label{AZPairingBasilica}
\langle x^2-1,x^2\rangle = 0.167...,
\end{equation}
and we conclude from Theorem~\ref{SmallAZPairing} that $B_0( \QQ(\mu_{2^\infty}))\leq \langle x^2-1,x^2\rangle=0.167...$.  This is a contradiction of the bound $B_0( \QQ(\mu_{2^\infty}))\geq(\log2)/4=0.173...$ proved in Theorem~\ref{AD2Theorem}.
\end{proof}

\section{Numerical approximation of \\ Archimedean Arakelov-Zhang integrals}\label{ApproxSect}

In this section we prove a result which may be of use in numerically approximating the value of the Archimedean part of the Arakelov-Zhang pairing $\langle x^2+c,x^2\rangle$.

Given a polynomial $\phi(x)\in\CC[x]$ of degree $2$, we may express the corresponding canonical local height function as
\begin{equation}\label{LocalHeightL2Norm}
\lambda_{\phi}(x)=\lim_{n\to+\infty}\frac{1}{2^n}\log(1+|\phi^n(x)|^2)^{1/2}.
\end{equation}
Note that, compared to (\ref{LocalHeightDef}), we have replaced $\log^+|\cdot|$ with $\log(1+|\cdot|^2)^{1/2}$; this choice gives differentiable approximations to $\lambda_\phi(x)$, but in the limit it defines precisely the same Archimedean local height function as in (\ref{LocalHeightDef}).

Next define
\begin{equation}\label{Const}
B(\phi)=\sup_{x\in\CC}|\log(1+|\phi(x)|^2)^{1/2}-2\log(1+|x|^2)^{1/2}|.
\end{equation}
The significance of this constant follows from a standard telescoping series argument (\cite{MR2316407} Thm. 3.20), which shows that for each $n\geq1$ we have
\begin{equation}\label{TelescopingIneq}
\begin{split}
\bigg|\lambda_{\phi}(x)-\frac{1}{2^{n}}\log(1+|\phi^n(x)|^2)^{1/2}\bigg| \leq \frac{B(\phi)}{2^n}.
\end{split}
\end{equation}

\begin{prop}\label{AZApproxProp}
Let $\phi(x)=x^2+c$ for $c\in\CC$, let $N\geq1$ and $M\geq1$ be integers, and let $\mu_M$ denote the set of $M$-th roots of unity in $\CC$.  Then 
\begin{equation}\label{AZNumericalBound}
\bigg|\int_{0}^{1}\lambda_{\phi}(e^{2\pi it})dt-\frac{1}{M}\sum_{\zeta\in \mu_M}\frac{1}{2^N}\log(1+|\phi^N(\zeta)|^2)^{1/2}\bigg|\leq \frac{B(\phi)}{2^N}+\frac{\pi T^N}{M}
\end{equation}
where $T=(1+\sqrt{1+4|c|})/2$.
\end{prop}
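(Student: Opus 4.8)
The plan is to apply the triangle inequality to the left side of (\ref{AZNumericalBound}), writing it as the sum of a \emph{vertical} error and a \emph{horizontal} (quadrature) error. Put $F(t):=\frac{1}{2^N}\log(1+|\phi^N(e^{2\pi it})|^2)^{1/2}$, a smooth $1$-periodic function of $t$; note that $F(k/M)=\frac{1}{2^N}\log(1+|\phi^N(\zeta)|^2)^{1/2}$ when $\zeta=e^{2\pi ik/M}\in\mu_M$, so $\frac1M\sum_{k=0}^{M-1}F(k/M)$ is exactly the average in (\ref{AZNumericalBound}). The vertical error $\int_0^1|\lambda_\phi(e^{2\pi it})-F(t)|\,dt$ is handled at once: apply the telescoping inequality (\ref{TelescopingIneq}) with $x=e^{2\pi it}$ and $n=N$ to get $|\lambda_\phi(e^{2\pi it})-F(t)|\le B(\phi)/2^N$ pointwise, then integrate over $t\in[0,1]$.

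For the horizontal error I would use the elementary rectangle-rule estimate $\bigl|\int_0^1 F\,dt-\frac1M\sum_{k=0}^{M-1}F(k/M)\bigr|\le\frac{1}{2M}\sup_{[0,1]}|F'|$, proved by splitting $[0,1]$ into the intervals $[k/M,(k+1)/M]$ and bounding $|F(t)-F(k/M)|\le(t-k/M)\sup|F'|$ on each. So the whole proof reduces to showing $\sup_t|F'(t)|\le 2\pi T^N$. Differentiating $F(t)=\frac{1}{2^{N+1}}\log(1+|z_N|^2)$ with $z_j:=\phi^j(e^{2\pi it})$, and using $\frac{d}{dt}z_N=2\pi i e^{2\pi it}(\phi^N)'(e^{2\pi it})$, the chain-rule identity $(\phi^N)'(w)=2^N\prod_{j=0}^{N-1}z_j$ (since $\phi'(x)=2x$), the fact that $|e^{2\pi it}|=1$, and the elementary inequality $\frac{|z_N|}{1+|z_N|^2}\le\frac{1}{\max(1,|z_N|)}$, one arrives at the clean bound $|F'(t)|\le \dfrac{2\pi\prod_{j=0}^{N-1}|z_j|}{\max(1,|z_N|)}$.

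The crux, and the step I expect to be the main obstacle, is to bound $\prod_{j=0}^{N-1}|z_j|\big/\max(1,|z_N|)$ by $T^N$ along an arbitrary forward orbit with $|z_0|=1$. A naive bound $\prod_{j=0}^{N-1}|z_j|\le C^N$ is \emph{false}: already for $c=-1$ there are unit-modulus points whose orbits escape with $|z_{j+1}|\approx|z_j|^2$, so the numerator alone can grow doubly exponentially, and the denominator $\max(1,|z_N|)$ is essential. The device I would use is the ``potential'' $\rho(z):=\max(T,|z|)$ together with the one-step inequality
\[
|z|\,\rho(z)\le T\,\rho(z^2+c)\qquad\text{for all }z\in\CC,
\]
checked in two cases. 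If $|z|\le T$ then $\rho(z)=T$ and the inequality reads $|z|T\le T^2\le T\rho(z^2+c)$, since $\rho\ge T$ everywhere. If $|z|>T$ then $|z^2+c|\ge|z|^2-|c|>T^2-|c|=T$, so $\rho(z^2+c)=|z^2+c|\ge|z|^2-|c|$, and using the defining relation $|c|=T^2-T=T(T-1)$ (equivalently, $T$ is the positive root of $X^2-X-|c|$) the desired $|z|^2\le T(|z|^2-|c|)$ reduces to $|z|\ge T$. Telescoping over $j=0,\dots,N-1$ gives $\prod_{j=0}^{N-1}|z_j|\le T^N\rho(z_N)/\rho(z_0)=T^{N-1}\max(T,|z_N|)$, using $\rho(z_0)=\max(T,1)=T$ as $T\ge1$.

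Finally $\max(T,|z_N|)/\max(1,|z_N|)\le T$ (immediate from the cases $|z_N|\le1$, $1\le|z_N|\le T$, $|z_N|\ge T$), so combining gives $|F'(t)|\le 2\pi T^{N-1}\cdot T=2\pi T^N$, whence the horizontal error is at most $\pi T^N/M$; adding the vertical error $B(\phi)/2^N$ yields (\ref{AZNumericalBound}). Apart from the potential-function trick, every step — the triangle-inequality split, the telescoping estimate from (\ref{TelescopingIneq}), the rectangle-rule quadrature bound, and the routine differentiation — is standard.
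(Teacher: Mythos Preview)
Your proof is correct and follows essentially the same approach as the paper: the same triangle-inequality split into vertical and horizontal errors, the same telescoping bound (\ref{TelescopingIneq}) for the vertical piece, the same rectangle-rule estimate $\|F'\|_\infty/(2M)$ for the horizontal piece, and the same reduction to bounding the orbit product $\prod_j|z_j|$ via the relation $T^2-T-|c|=0$. Your potential-function device $\rho(z)=\max(T,|z|)$ with the uniform one-step inequality $|z|\rho(z)\le T\rho(\phi(z))$ is a somewhat cleaner packaging of the paper's argument, which instead splits on the first index $n_0$ with $|\phi^{n_0}(x)|>T$ and then iterates $|\phi^n(x)|<T^{1/2}|\phi^{n+1}(x)|^{1/2}$ in a chain; the underlying content is the same.
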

\begin{proof}
Using (\ref{TelescopingIneq}) we obtain
\begin{equation}\label{FirstEstimate}
\bigg|\int_{0}^{1}\lambda_{\phi}(e^{2\pi it})dt - \int_{0}^{1}f_N(t)dt\bigg| \leq \frac{B(\phi)}{2^N}
\end{equation}
where
$$
f_N:\RR/\ZZ\to\RR \hskip1cm f_N(t)=\frac{1}{2^{N}}\log(1+|\phi^N(e^{2\pi it})|^2)^{1/2}.
$$

If $f:\RR/\ZZ\to\RR$ is continuously differentiable and $M\geq1$, then 
\begin{equation}\label{RiemannSumEst}
\bigg|\frac{1}{M}\sum_{m=1}^{M}f(m/M)-\int_0^1f(t)dt\bigg|\leq \frac{\|f'\|_\infty}{2M},
\end{equation}
where $\|g\|_\infty=\sup\{|g(t)|\mid t\in\RR/\ZZ\}$.  This inequality can be verified using an elementary Riemann sum argument, together with the mean-value theorem estimate $|f(\frac{m}{M})-f(t)|\leq\|f'\|_\infty/2M$ whenever $t\in[\frac{m}{M}-\frac{1}{2M},\frac{m}{M}+\frac{1}{2M}]$.

We are going to show that $\|f_N'\|_\infty\leq 2\pi T^N$, and so applying (\ref{RiemannSumEst}) with $f=f_N$, and combining with (\ref{FirstEstimate}), we obtain (\ref{AZNumericalBound}).

It remains only to prove $\|f_N'\|_\infty\leq 2\pi T^N$.  Given any polynomial $F(x)\in\CC[x]$, an application of the multivariable chain rule gives, for $t\in\RR$, 
\begin{equation}\label{DerivativeFN}
\bigg|\frac{d}{dt}\log(1+|F(e^{2\pi it})|^2)^{1/2}\bigg| \leq \frac{2\pi|F(e^{2\pi it})||F'(e^{2\pi it})|}{1+|F(e^{2\pi it})|^2}.
\end{equation}

The choice of $T$ in terms of $c$ was made so that $T^2-T-|c|=0$, which can be used to show that 
\begin{equation}\label{ChoiceOfTImplies}
|x|>T \hskip1cm \Rightarrow \hskip1cm |\phi(x)| > |x|^2/T > T
\end{equation}
for all $x\in\CC$.  Indeed, $|\phi(x)|\geq |x|^2-|c|$ and therefore
$$
\frac{|\phi(x)|}{|x|^2}\geq1-\frac{|c|}{|x|^2}>1-\frac{|c|}{T^2}=\frac{1}{T}.
$$

Using an iteration of the inequality (\ref{ChoiceOfTImplies}), we claim that for each $x\in \CC$, we have
\begin{equation}\label{OrbitProductBound}
|x\phi(x)\phi^2(x)\dots\phi^{N}(x)|\leq T^N(1+|\phi^N(x)|^2).
\end{equation}
Indeed, if $|\phi^n(x)|\leq T$ for all $n=0,1,2,\dots,N-1$, the bound is trivial.  Otherwise, let $0\leq n_0\leq N-1$ be the smallest $n$ for which $|\phi^n(x)|>T$.  Using (\ref{ChoiceOfTImplies}) we have $|\phi^n(x)|<T^{1/2}|\phi^{n+1}(x)|^{1/2}$ for all $n_0\leq n\leq N-1$.  Thus, letting $m_0=N-n_0$, we have
\begin{equation*}
\begin{split}
|x\phi(x)\phi^2(x)\dots\phi^{N}(x)| 
	& \leq T^{n_0}|\phi^{n_0}(x)||\phi^{n_0+1}(x)|\dots|\phi^{N}(x)|  \\
	& < T^{n_0}T^{\frac{1}{2}}|\phi^{n_0+1}(x)|^{\frac{1}{2}+1}|\phi^{n_0+2}(x)|\dots|\phi^{N}(x)|  \\
	& < T^{n_0}T^{\frac{1}{2}}T^{\frac{1}{4}+\frac{1}{2}}|\phi^{n_0+2}(x)|^{\frac{1}{4}+\frac{1}{2}+1}|\phi^{n_0+3}(x)|\dots|\phi^{N}(x)|  \\	
	& \vdots \\
	& < T^{n_0}T^{\frac{1}{2}}T^{\frac{1}{4}+\frac{1}{2}}\dots T^{\frac{1}{2^{m_0}}+\dots+\frac{1}{2}}|\phi^{N}(x)|^{\frac{1}{2^{m_0}}+\dots+\frac{1}{2}+1}  \\
	& < T^N(1+|\phi^N(x)|^2).
\end{split}
\end{equation*}
Using $\phi'(x)=2x$ and the chain rule we have
\begin{equation}
\begin{split}
(\phi^N)'(x) & = \phi'(\phi^{N-1}(x))\phi'(\phi^{N-2}(x))\dots \phi'(\phi(x))\phi'(x) \\
	& = 2^N\phi^{N-1}(x)\phi^{N-2}(x)\dots \phi(x)x,
\end{split}
\end{equation}
and applying (\ref{OrbitProductBound}) we obtain
\begin{equation}\label{ProductBound}
\begin{split}
|\phi^N(x)||(\phi^N)'(x)| & = 2^N|\phi^{N}(x)\phi^{N-1}(x)\phi^{N-2}(x)\dots \phi(x)x| \\
	& \leq 2^NT^N(1+|\phi^N(x)|^2).
\end{split}
\end{equation}
Finally, taking $F(x)=\phi^N(x)$ in (\ref{DerivativeFN}) and using the bound (\ref{ProductBound}) we conclude $\|f_N'\|_\infty\leq 2\pi T^N$, completing the proof.
\end{proof}

We conclude with an explanation of how to use Proposition~\ref{AZApproxProp} to obtain the numerical calculation (\ref{AZPairingBasilica}) of $\langle x^2-1,x^2\rangle$ to the specified precision.  Since both $\phi(x)=x^2-1$ and $\psi(x)=x^2$ are monic with integer coefficients, the non-Archimedean contributions in (\ref{AZPairing}) vanish, and since the Archimedean canonical measure $\mu_{\phi,\infty}$ is the normalized Haar measure supported on the unit circle of $\CC$, we have  
\begin{equation}\label{AZPairingExample}
\langle \phi, \psi \rangle = \int_{0}^{1}\lambda_{\phi,\infty}(e^{2\pi it})dt.
\end{equation}

We approximate this integral using Proposition~\ref{AZApproxProp} with $\phi(x)=x^2-1$.  Clearly $T=(1+\sqrt{5})/2$, and we will show that $B(\phi)=(\log5)/2$, which according to the definition (\ref{Const}) is equivalent to checking that
\begin{equation}\label{ConstExample}
\sup_{x\in\CC}\left|\log\frac{1+|x^2-1|^2}{(1+|x|^2)^2}\right|=\log5.
\end{equation}
To establish (\ref{ConstExample}) is to prove both of the inequalities
\begin{equation}\label{ConstExample2}
\textstyle\frac{1}{5}(1+|x|^2)^2 \leq 1+|x^2-1|^2 \leq 5(1+|x|^2)^2
\end{equation}
and to prove that at least one of them is sharp.  The second inequality (with the stronger constant $2$ in place of $5$) is easily checked using only the triangle inequality.  For the second inequality, we see using the triangle inequality that $1+|x^2-1|^2\geq|x|^4-2|x|^2+2$, with equality when $x$ is real; we complete the proof by noting that $\frac{r^4-2r^2+2}{(1+r^2)^2}$ is minimized at $r=\sqrt{3/2}$ with minumum value $1/5$.

Taking $N=13$ and $M=2^{24}$, we have
\begin{equation}\label{AZPairingExampleNumerical}
\int_{0}^{1}\lambda_{\phi,\infty}(e^{2\pi it})dt = \frac{1}{2^{38}}\sum_{\zeta\in \mu_{2^{24}}}\log(1+|\phi^{13}(\zeta)|^2)+\theta
\end{equation}
where $|\theta|\leq (\log5)/2^{14}+ \pi ((1+\sqrt{5})/2)^{13}/2^{24}=0.000195...$.  Finally, one may perform the calculation
\begin{equation}\label{AZPairingExampleNumericalSum}
\frac{1}{2^{38}}\sum_{\zeta\in \mu_{2^{24}}}\log(1+|\phi^{13}(\zeta)|^2) = 0.16772223...
\end{equation}
using any implementation of arbitrary precision floating-point arithmetic; specifically we used the RealField() package in SageMath \cite{sagemath} in a computation taking about three hours.  It follows from (\ref{AZPairingExample}), (\ref{AZPairingExampleNumerical}), (\ref{AZPairingExampleNumericalSum}), and the bound on $\theta$ that (\ref{AZPairingBasilica}) is accurate to the indicated precision.

It was pointed out to us by an anonymous referee that, if one only wants to check that $\langle x^2-1,x^2\rangle<(\log2)/4$ but without giving an explicit numerical approximation for $\langle x^2-1,x^2\rangle$, then one could get away with the smaller parameters $N=9$ and $M=2^{16}$, resulting in a much faster calculation.



\end{document}